\DeclareMathOperator{\Ind}{Ind}
\DeclareMathOperator{\Jac}{Jac}
\DeclareMathOperator{\USp}{USp}
\DeclareMathOperator{\SU}{SU}
\DeclareMathOperator{\Sp}{Sp}
\DeclareMathOperator{\U}{U}
\DeclareMathOperator{\GL}{GL}
\DeclareMathOperator{\Id}{Id}
\DeclareMathOperator{\ST}{ST}
\DeclareMathOperator{\Gal}{Gal}
\DeclareMathOperator{\End}{End}
\DeclareMathOperator{\Aut}{Aut}
\DeclareMathOperator{\diag}{diag}
\DeclareMathOperator{\antidiag}{antidiag}
\DeclareMathOperator{\TL}{TL}
\DeclareMathOperator{\AST}{AST}
\DeclareMathOperator{\Hg}{Hg}
\DeclareMathOperator{\LL}{L}
\DeclareMathOperator{\tr}{tr}
\DeclareMathOperator{\Frob}{Frob}
\newtheorem{theorem}{Theorem}[section]
\newtheorem{example}[theorem]{Example}
\newtheorem{prop}[theorem]{Proposition}
\newtheorem{lemma}[theorem]{Lemma}
\newtheorem{defn}[theorem]{Definition}
\newtheorem{corollary}[theorem]{Corollary}
\newtheorem{conjec}[theorem]{Conjecture}
\theoremstyle{definition}
\theoremstyle{remark}
\newtheorem*{remark*}{Remark}
\author{Melissa Emory }
\address{Department of Mathematics, University of Toronto; 40 St. George Street, Toronto, Ontario, Canada M5S 2E4}
\email{memory@math.toronto.edu}
\author{Heidi Goodson}
\address{Department of Mathematics, Brooklyn College, City University of New York; 2900 Bedford Avenue, Brooklyn, NY 11210 USA}
\email{heidi.goodson@brooklyn.cuny.edu}
\title[Sato-Tate Distributions of $y^2=x^p-1$ and $y^2=x^{2p}-1$ ]{Sato-Tate Distributions of  $y^2=x^p-1$ and $y^2=x^{2p}-1$}
\begin{document}


\begin{abstract}
  We determine the Sato-Tate groups and prove the generalized Sato-Tate conjecture for the Jacobians of curves of the form
$  y^2=x^p-1$ and $y^2=x^{2p}-1,$
where $p$ is an odd prime. Our results rely on the fact the Jacobians of these curves are nondegenerate, a fact that we prove in the paper. Furthermore, we compute moment statistics associated to the Sato-Tate groups. These moment statistics can be used to verify the equidistribution statement of the generalized Sato-Tate conjecture by comparing them to moment statistics obtained for the traces in the normalized $L$-polynomials of the curves.
\end{abstract}

\subjclass[2010]{11M50, 11G10, 11G20, 14G10}
\keywords{Sato-Tate groups, Sato-Tate distributions, Hyperelliptic curves}

\maketitle

\section{Introduction}

The original Sato-Tate conjecture is a statistical conjecture regarding the distribution of the normalized traces of Frobenius on an elliptic curve without complex multiplication (CM), and the conjecture was recently generalized to higher genus curves by Serre \cite{Ser12}.  Recent results on this topic of determining Sato-Tate distributions in genus 2 and 3 curves have been achieved in \cite{Arora2016, Fite2012, FKS2019, FiteLorenzo2018, FiteSuthlerland2014, Fite2016, KedlayaSutherland2009,  LarioSomoza2018}.  In 2016, Fit\'e-Gonz\'alez-Lario \cite{FGL2016} obtained Sato-Tate equidistribution results for a family of curves of arbitrarily high genus.  The main purposes of this paper are to compute the Sato-Tate groups and to  prove the generalized Sato-Tate conjecture for the following two families of hyperelliptic curves of arbitrarily high genus
$$C_p:y^2=x^p-1 \; \text{ and }\;C_{2p}:y^2=x^{2p}-1,$$
where $p$ is an odd prime.  The generalized Sato-Tate conjecture is known for CM abelian varieties due to the work of Johansson in \cite{Joh17}; in our proof for $C_p$ we follow Serre's strategy from \cite{Ser98}. We provide numerical evidence to support our results by computing moment statistics associated to the curves.

We start by recalling the original Sato-Tate conjecture for elliptic curves.  Let $F$ be a number field,  $E/F$ be an elliptic curve without complex multiplication, and $v$ be a finite prime of $F$ such that $E$ has good reduction at $v$.  By a theorem of Hasse,  the number of $\mathbb{F}_{q_v}$ points of $E$ is $q_v+1-a_v$, where $\mathbb{F}_{q_v}$ denotes the residue field of $v$ and  $a_v$ is an integer (called the trace of Frobenius at $v$) satisfying $|a_v| \leq 2{q_v}^{1/2}$.  The Sato-Tate conjecture predicts that,  as $v$ varies through the primes of good reduction for $E$, the normalized Frobenius traces $a_v/{q_v}^{1/2}$ are equidistributed in the interval $[-2,2]$ with respect to the image of the Haar measure on the special unitary group $\SU(2)$. This conjecture  has been proven for non-CM elliptic curves defined over totally real fields (see \cite{Barnet2011, Clozel2008,Harris2010,Taylor2008}).  The distributions of the normalized Frobenius traces are also known for CM elliptic curves over all fields: they are distributed with respect to the image under the trace map of the Haar measure on either the  unitary group $\U(1)$ or the normalizer of $\U(1)$ in $\SU(2)$, depending on whether or not the field of definition contains the field of complex multiplication (see, for example, \cite{allen2018potential} or \cite{Banaszak2015}).

The generalized Sato-Tate conjecture for an abelian variety predicts the existence of a compact Lie group that determines the limiting distribution of normalized local Euler factors.  We now state the conjecture more precisely for abelian varieties that are the Jacobians of curves defined over $\mathbb Q$, following the exposition of  \cite{FGL2016}.

Let $C$ be a smooth, projective, genus $g$ curve defined over $\mathbb Q$ and let $K$ be the minimal extension over which all endomorphisms of $\Jac(C)$ are defined. The Sato-Tate group of the Jacobian of $C$, $\ST(\Jac(C))\subseteq \USp(2g)$, is a compact Lie group satisfying the following property. For each prime $p$ at which $C$ has good reduction, there exists a conjugacy class $x_p$ of $\ST(\Jac(C))$ whose characteristic polynomial equals the normalized $L$-polynomial 
 \begin{equation}\label{eqn:normLpoly}\overline L_p(C,T) = T^{2g}+a_1T^{2g-1} +a_2 T^{2g-2} + \cdots + a_2T^2+a_1T+1.\end{equation}

Let $F$ be a number field and let $X_F$ be the set of conjugacy classes of $\ST(\Jac(C)_F)$. Let $\{p_i\}_{i \geq 1}$ be an ordering by norm of the set of primes of good reduction for $C$ over $F$ and define a map 
that sends $p_i$ to $x_{p_i}$ in $X_{F}$.  We can now state the generalized Sato-Tate conjecture.
\begin{conjec}\label{conjec:generalST}
(Generalized Sato-Tate Conjecture)
    The sequence $\{x_{p_i}\}_{i \geq 1}$ is equidistributed on $X_F$ with respect to the image on $X_F$ of the Haar measure of $\ST(\Jac(C)_F).$
\end{conjec}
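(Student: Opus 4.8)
The plan is to prove Conjecture~\ref{conjec:generalST} for $C=C_p$ and $C=C_{2p}$ by reducing the equidistribution statement to the theory of abelian varieties with complex multiplication. \textbf{Step 1 (CM structure).} The order-$p$ (resp.\ order-$2p$) automorphism $(x,y)\mapsto(\zeta x,y)$ endows $\Jac(C_p)$ with an action of $\mathbb Q(\zeta_p)$, and the extra quotient maps of $C_{2p}$ (by $x\mapsto -x$) decompose $\Jac(C_{2p})$ up to isogeny into CM factors. I would make this isogeny decomposition explicit, identify the endomorphism algebra of each Jacobian over $\overline{\mathbb Q}$, and pin down the minimal field $K$ over which all endomorphisms are defined (generated by appropriate roots of unity, together with a square root for $C_{2p}$). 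In particular these Jacobians are CM abelian varieties, and their CM types are read off from the Galois action on the basis $x^{i}\,dx/y$ of holomorphic differentials.

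\textbf{Step 2 (the Sato-Tate group).} Since the Mumford-Tate conjecture is known for CM abelian varieties, the identity component $\ST^0(\Jac(C))$ is the maximal compact subgroup of $\mathrm{MT}(\Jac(C))_{\mathbb C}$ — a torus — and the component group $\ST/\ST^0$ is $\Gal(K/\mathbb Q)$, acting through its permutation of the embeddings of the CM field. I would compute this torus and its components explicitly as a subgroup of $\USp(2g)$, so that for each subextension $F$ of $K/\mathbb Q$ the group $\ST(\Jac(C)_F)$ is determined; this also recovers, over $F=K$, the connected torus case and, over smaller $F$, the normalizer-type groups.

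\textbf{Step 3 (nondegeneracy — the main obstacle).} The dimension of $\ST^0$, equivalently of $\Hg(\Jac(C))$, equals the rank of the subtorus of $\mathrm{Res}_{\mathbb Q(\zeta)/\mathbb Q}\mathbb G_m$ cut out by the $\mathbb Z$-linear relations satisfied by the characters $\chi^{a}$ appearing in the CM type. I expect the heart of the paper to be proving that these Jacobians are \emph{nondegenerate}, i.e.\ that $\Hg$ attains the maximal rank allowed by the polarization and the CM field, which amounts to showing that the only relations among the relevant characters are the ``obvious'' ones of the form $\chi^{a}+\chi^{-a}=\text{const}$. I would establish this either by invoking known nondegeneracy criteria for abelian varieties with CM by cyclotomic fields (the prime-conductor case being classical) or by a direct computation in the character lattice of $\mathbb Q(\zeta_p)$; this is the step that requires genuine arithmetic input, since it is exactly the absence of exceptional Hodge classes.

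\textbf{Step 4 (equidistribution and moments).} With $\ST$ identified as an explicit torus-with-finite-extension, equidistribution follows: one route is to quote Johansson's theorem \cite{Joh17} that Conjecture~\ref{conjec:generalST} holds for CM abelian varieties; a second, more self-contained route — the one I would use for $C_p$, following Serre \cite{Ser98} — is to express the normalized Frobenius data through Hecke Gr\"ossencharacters of $K$ and apply Weyl's criterion, noting that for every nontrivial irreducible $\rho$ of $\ST$ the partial sums $\sum_{i\le N}\tr\rho(x_{p_i})$ are $o(N)$ because the attached Hecke $L$-functions are holomorphic and nonvanishing on $\mathrm{Re}(s)=1$. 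Finally, to give the promised numerical checks, I would compute the moment statistics $M_n=\int_{\ST}(\tr\rho_{\mathrm{std}})^{n}\,d\mu$ via the Weyl integration formula on $\ST^0$, averaged over the components, and match them against empirical moments of the coefficients of $\overline L_p(C,T)$. The principal difficulty throughout is Step~3: once the Hodge group is known to be the full expected torus, the structure of $\ST$ and the equidistribution are essentially formal consequences of the CM machinery.
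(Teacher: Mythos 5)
Your proposal follows essentially the same route as the paper: nondegeneracy is obtained from the classical prime-conductor CM-type result (Shioda's Corollary 5.3) together with the isogeny $\Jac(C_{2p})\sim\Jac(C_p)^2$, the Sato--Tate group is computed as a torus extended by $\Gal(K/\mathbb Q)$ via the twisted Lefschetz/algebraic Sato--Tate machinery, and equidistribution is proved by Serre's criterion with Hecke Gr\"ossencharacter $L$-functions for $C_p$ (where cyclicity of $\Gal(\mathbb Q(\zeta_p)/\mathbb Q)$ is used to extend characters) and by Johansson's theorem for $C_{2p}$ (whose Galois group is not cyclic). This matches the paper's argument in structure and in all the key inputs.
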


Our goals in this paper are to determine the Sato-Tate groups of the Jacobians of the curves $C_p$ and $C_{2p}$ (see Theorem  \ref{theorem:STp} and Theorem \ref{thm:ST2p}) and to prove the equidistribution predicted by the generalized Sato-Tate conjecture for these two families of curves (see Theorem \ref{theorem:STconjec} and Theorem \ref{theorem:STconjecfor2p}).   
\begin{theorem}\label{thm:introST}
Let $p$ be an odd prime.  The generalized Sato-Tate conjecture holds for the Jacobians of the curves $C_p:\; y^2=x^p-1$ and $C_{2p}:\;y^2=x^{2p}-1$.
\end{theorem}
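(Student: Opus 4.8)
The proof of Theorem~\ref{thm:introST} rests on two ingredients: an explicit identification of the Sato-Tate groups $\ST(\Jac(C_p))$ and $\ST(\Jac(C_{2p}))$ coming from the complex multiplication on the Jacobians, and an equidistribution statement for CM abelian varieties. I would first record the complex multiplication structure. The curve $C_p$ carries the automorphism $(x,y)\mapsto(\zeta_p x,y)$ of order $p$, so $\mathbb{Q}(\zeta_p)$ embeds in $\End^0(\Jac(C_p)_{\overline{\mathbb{Q}}})$; since $[\mathbb{Q}(\zeta_p):\mathbb{Q}]=p-1=2\dim\Jac(C_p)$, the Jacobian is of CM type, and the CM type is read off from the action on the differentials $x^{j-1}\,dx/y$, $1\le j\le(p-1)/2$. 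Similarly $C_{2p}$ has an automorphism of order $2p$, and $\Jac(C_{2p})$ is isogenous over $\overline{\mathbb{Q}}$ to a product of CM abelian varieties with cyclotomic CM fields, one factor being $\Jac(C_p)$. Part of this step is to pin down the field $K$ over which all endomorphisms are defined, which is a cyclotomic field in each case.

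The technical core is to prove that these Jacobians are \emph{nondegenerate}: that the Hodge group $\Hg(\Jac(C))$ is a torus whose character lattice has the maximal rank allowed by the CM structure. Concretely this means showing that the only $\mathbb{Z}$-linear relations among the embeddings occurring in the CM type(s) are those forced by complex conjugation, a statement one verifies by analyzing vanishing sums of roots of unity, equivalently the multiplicative relations among the associated Jacobi-sum Hecke characters. Once nondegeneracy is known, $\ST^0(\Jac(C)_F)$ is the maximal compact subgroup of this torus inside $\USp(2g)$, and the full group $\ST(\Jac(C)_F)$ is obtained as the extension of $\ST^0$ determined by the twisted Lefschetz group $\TL(\Jac(C)_F)$, i.e.\ by the action of $\Gal(K/F)$ on $\Hg$ compatible with the polarization. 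This establishes Theorems~\ref{theorem:STp} and~\ref{thm:ST2p}.

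With the Sato-Tate groups in hand, equidistribution follows from the CM case of the generalized Sato-Tate conjecture. For $C_{2p}$ one may invoke Johansson's theorem \cite{Joh17}. For $C_p$ I would instead follow Serre's method in \cite{Ser98}: one reduces equidistribution of the Frobenius conjugacy classes in $X_F$ to the holomorphy and nonvanishing at $s=1$ of the $L$-functions attached to the nontrivial irreducible representations of $\ST(\Jac(C_p)_F)$; since $\Jac(C_p)$ has complex multiplication over the abelian field $K$, these $L$-functions are products of Hecke $L$-functions of $K$ twisted by nontrivial characters, hence entire and nonzero at $s=1$ by Hecke. Matching the limiting measure so obtained with the Haar measure of $\ST(\Jac(C_p)_F)$ uses precisely the nondegeneracy computation of the previous step. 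In both cases one must verify that the hypotheses hold over every subextension $F/\mathbb{Q}$ of $K/\mathbb{Q}$.

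The main obstacle is the nondegeneracy claim. Given the correct Sato-Tate group, the equidistribution is essentially the known CM situation; but establishing that the Hodge torus has maximal rank, and hence that the group we write down is genuinely the Sato-Tate group, requires a careful combinatorial analysis of the CM types of all isogeny factors and of the relations among their embeddings. A secondary difficulty is the bookkeeping of the component groups over the intermediate fields $F$, since the structure of $\ST(\Jac(C)_F)$ varies with $\Gal(K/F)$.
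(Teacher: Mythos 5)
Your proposal follows the same architecture as the paper: nondegeneracy of the Jacobians, identification of the algebraic Sato--Tate group with the twisted Lefschetz group, explicit computation of $\ST^0$ and the component group, and then equidistribution via Serre's $L$-function criterion and Hecke's nonvanishing theorem for $C_p$ (exploiting that $\Gal(\mathbb{Q}(\zeta_p)/\mathbb{Q})$ is cyclic, so characters of the subgroups $H_{\underline b}$ extend) and via Johansson's CM result for $C_{2p}$. The one place you diverge is the nondegeneracy step, which you flag as the technical core and propose to prove by a direct combinatorial analysis of relations among the embeddings in the CM type (vanishing sums of $p$-th roots of unity). The paper does not do this: it cites Shioda's result that for $p$ prime the Hodge ring of $\Jac(C_p)$ \emph{and of all its powers} is generated by divisor classes, so $\Jac(C_p)$ is stably nondegenerate; combined with the isogeny $\Jac(C_{2p})\sim\Jac(C_p)^2$ this immediately gives nondegeneracy of $\Jac(C_{2p})$ as well. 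Your hand computation would work (for $p$ prime the only vanishing sums of $p$-th roots of unity are multiples of the full sum, which is exactly what forces the CM type to be nondegenerate, and Hazama's theorem converts nondegenerate CM type into stable nondegeneracy for the simple variety $\Jac(C_p)$), but it re-proves a known theorem; the citation route is shorter and, importantly, already covers the powers needed for $C_{2p}$, which your "analysis of all isogeny factors" would otherwise have to address explicitly. Everything else in your outline matches the paper's proof.
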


The Sato-Tate conjecture was proven for CM abelian varieties in \cite{Joh17}, and the Jacobians of both curves in Theorem \ref{thm:introST} are CM abelian varieties.  However,  to provide an explicit description of the limiting distribution of the normalized $L$-polynomial, we need the explicit embedding of the  Sato-Tate group of the Jacobian of the curve inside $\USp(2g)$, where $g$ is the genus of the curve. Our approach is similar to that of, for example, \cite{FGL2016} and \cite{LarioSomoza2018}. 

This paper is organized as follows.  In Section \ref{sec:nondegen}, we establish the nondegeneracy of the Jacobians of $C_p$ and $C_{2p}$. In  Section \ref{sec:algSTgroup}, we prove that the twisted Lefschetz group of the Jacobian, defined by Banaszak and Kedlaya in \cite{Banaszak2015}, is equal to the algebraic Sato-Tate group; this essentially follows from the work of \cite{Banaszak2003} since the Jacobians of our curves are nondegenerate. The equality of the twisted Lefschetz group and the algebraic Sato-Tate group allows one to interpret the Sato-Tate group as a maximal compact subgroup of the group of $\mathbb{C}$-points of the base change of the algebraic Sato-Tate group to $\mathbb{C}$. 

In Section \ref{sec:STgroups}, we apply the work of Section \ref{sec:algSTgroup} to determine the Sato-Tate groups of the Jacobians of the curves $C_p$ and $C_{2p}$. We first determine the identity components of the Sato-Tate groups (see Proposition  \ref{prop:idcomponent} and Proposition  \ref{prop:idcomponent2p}). Note that these propositions confirm Conjectures 6.8 and 6.9 of \cite{EmoryGoodsonPeyrot}. The computation of the twisted Lefschetz groups then gives the generators of the component groups (see Theorems \ref{theorem:STp} and \ref{thm:ST2p}). We give explicit examples of some of these generators in Table \ref{table:gammas} in Appendix \ref{app:component}.

In Section \ref{sec:equidist}, we establish Conjecture \ref{conjec:generalST} for $C_p$ and $C_{2p}$. For the curve $C_p$, we prove the generalized Sato-Tate conjecture by following Serre's strategy in \cite{Ser98}, i.e. showing that a certain $L$-function attached to the irreducible nontrivial representations of the Sato-Tate group of the Jacobian of the curve does not vanish.  We then  use a theorem of Hecke that the $L$-function attached to a nontrivial unitarized Hecke character does not vanish for Re$(s) \geq 1.$  This proof technique requires a cyclic Galois group $\Gal(K/\mathbb Q)$, where $K$ is the minimal extension over which all endomorphisms of the Jacobian are defined. The Galois group associated to the curve $C_{2p}$ is not cyclic, so we use the work of \cite{Joh17} to prove the generalized Sato-Tate conjecture in this case. 

In Section \ref{sec:moments}, we compute moment statistics associated to the Sato-Tate groups of the Jacobians of $C_p$ and $C_{2p}$. These moment statistics can be used to verify the equidistribution statement of the generalized Sato-Tate conjecture by comparing them to moment statistics obtained for the traces $a_i$ in the normalized $L$-polynomial $\overline L_p(C,T)$ in Equation \eqref{eqn:normLpoly}. Note that the numerical moment statistics are an approximation since one can only ever compute them up to some prime. It is of interest to those dealing with equidistribution statements to compare how close these two computations are.  We compare the moments in Table \ref{table:momemntsp} in  Section \ref{sec:moments}.

\subsection*{Notation and conventions} We begin by fixing notation used in later sections.  Let $C$ be a smooth projective curve defined over $\mathbb Q$. We write $\End(\Jac(C)_k)$ for the ring of endomorphisms defined over the field $k$ of the Jacobian of  $C$. Let $K:=K_C$ denote the minimal extension $L/\mathbb Q$ over which all the endomorphisms of the abelian variety $\Jac(C)$ are defined, i.e. the minimal extension for which $\End(\Jac(C)_L)\simeq \End(\Jac(C)_{\overline{\mathbb Q}})$; the field $K$ is called the endomorphism field of $\Jac(C)$.

We denote the Sato-Tate group of the Jacobian of $C$  by $\ST(\Jac(C)):=\ST(\Jac(C)_{\mathbb Q})$ with identity component denoted $\ST^0(\Jac(C)):=\ST^0(\Jac(C)_{\mathbb Q})$ and component group  $\ST(\Jac(C))/\ST^0(\Jac(C))$. The curve $y^2=x^m-1$ is denoted by $C_m$,  and when we specialize to $C_p$ or to $C_{2p}$ we assume throughout the paper that $p$ is an odd prime. We will write $\zeta_m$ for a primitive $m^{th}$ root of unity. For any rational number $x$ whose denominator is coprime to an integer $r$, $\langle x \rangle_r$ denotes the unique representative of $x$ modulo $r$ between 0 and $r-1$. 

Define the two matrices
$$I:=\begin{pmatrix}1&0\\0&1\end{pmatrix}\; \text{ and }\; J:=\begin{pmatrix}0&1\\-1&0\end{pmatrix}.$$ The symplectic form considered throughout the paper is given by $\diag(J,\dots,J).$
Lastly, for any positive integer $n$, we define the following subgroups of the unitary symplectic group $\USp(2n)$. 
\begin{equation*}\label{eqn:U1subn}
    \U(1)_n:=\left\langle \diag(\underbrace{u, \overline u, \ldots, u,\overline u}_{n-\text{times}}): u\in \mathbb C^\times, |u|=1\right\rangle
\end{equation*}
and 
\begin{equation*}\label{eqn:U1n}
    \U(1)^n:=\left\langle \diag( u_1,\overline{u_1},\ldots, u_n,\overline{u_n}):u_i\in \mathbb C^\times, |u_i|=1\right\rangle.
\end{equation*}

\subsection*{Acknowledgements} This material is based upon work supported by the National Security Agency under Grant No.\ H98230-19-1-0119, The Lyda Hill Foundation, The McGovern Foundation, and Microsoft Research, while the authors were in residence at the Mathematical Sciences Research Institute in Berkeley, California, during the summer of 2019. The first named author was supported by NSF grant DMS-2002085 and an AMS-Simons Travel Award.  The second author also received support for this project provided by a PSC-CUNY Award, jointly funded by The Professional Staff Congress and The City University of New York. 

We would like to thank Francesc Fit\'e for enlightening discussions while working on this project and Drew Sutherland for creating the histogram in Figure \ref{fig:x10histogram} for us (see also \cite{x10histogram}). We also thank Christelle Vincent, Holley Friedlander, and Fatma Cicek for their help with the Sage code to compute the matrices $\gamma$ and the characteristic polynomials during Sage Days 103. Lastly, we thank the reviewer for their thorough and  helpful comments.


\section{Nondegenerate Abelian Varieties}\label{sec:nondegen}

Our main results hold for curves  whose Jacobians are nondegenerate. In this section we define the term nondegenerate and give some known results that will be relevant to our later work.

Let $A$ be a nonsingular projective variety over $\mathbb C$. We denote (as in  \cite{Shioda82}) the (complexified) Hodge ring of $A$ by
$$\mathscr B^*(A):=\displaystyle\sum_{d=0}^{\dim(A)} \mathscr B^d(A), $$
where $\mathscr B^d(A)=(H^{2d}(A,\mathbb Q)\cap H^{d,d}(A))\otimes \mathbb C$ is the $\mathbb C$-span of Hodge cycles of codimension $d$ on $A$.  
Furthermore, we define the ring $$\mathscr D^*(A):=\displaystyle\sum_{d=0}^{\dim(A)} \mathscr D^d(A)$$ 
where $\mathscr D^d(A)$ is the $\mathbb C$-span of classes of intersection of $d$ divisors. This is the subring of $\mathscr B^*(A)$ generated by the divisor classes, i.e.  generated by $\mathscr B^1(A)$. In general, it is known that we have containment $\mathscr D^*(A) \subseteq \mathscr B^*(A)$ \cite{Shioda82}.

\begin{defn}\label{def:degenerate}\cite{Aoki2002}
An abelian variety $A$ is said to be \textbf{nondegenerate} if $\mathscr D^*(A) = \mathscr B^*(A)$. If $\mathscr D^*(A) \not= \mathscr B^*(A)$, then $A$ is said to be \textbf{degenerate}.
\end{defn}

\begin{defn}\label{def:stabdegenerate}\cite{Aoki2002,Hazama89}
An abelian variety $A$ is said to be \textbf{stably nondegenerate} if, for any integer $k\geq 1$, $\mathscr D^*(A^k) = \mathscr B^*(A^k)$. 
\end{defn}
Hazama proves in Theorem 1.2 of \cite{Hazama89} that $A$ is stably nondegenerate if and only if the dimension of its Hodge group is maximal. Note that in \cite{FGL2016}, the authors use the word \emph{nondegenerate} to describe abelian varieties with this property.

Nondegeneracy is related to the CM-type  of an abelian variety. Let $A/\mathbb Q$ be a dimension $d$ abelian variety. Suppose that there is a number field $K$ with $[K:\mathbb Q]=2d$ and an injective ring homomorphism $\iota: K \rightarrow \End(A)\otimes \mathbb Q$. The map $\iota$ induces a representation $\Phi$ of $K$ on the space of holomorphic 1-forms on $A$ and we say that $(A,\iota)$ is of \textbf{CM-type} $(F,\Phi)$ (see, for example, \cite{ShimuraTaniyama1961,Kubota1965,Aoki2002,Pohlmann1968}). When $A$ is an absolutely simple abelian variety with complex multiplication, 
stable nondegeneracy is equivalent to the CM-type being nondegenerate, i.e. the CM-type has maximal rank (see, for example, \cite{Kubota1965, Aoki2002}).

Let $\mathscr C^d(A)$ be the subspace of $\mathscr B^d(A)$ generated by the classes of algebraic cycles on $A$ of codimension $d$. Then 
$$\mathscr D^d(A) \subseteq \mathscr C^d(A) \subseteq \mathscr B^d(A)$$
and the Hodge Conjecture for $A$ asserts that $\mathscr C^d(A) = \mathscr B^d(A)$ for all $d$ \cite{Aoki2002, Shioda82}. It is clear from Definition \ref{def:degenerate} that if $A$ is nondegenerate then the Hodge Conjecture holds. However, there are many cases where the Hodge Conjecture holds for degenerate abelian varieties. For example,   Shioda verified the Hodge Conjecture for  $\Jac(C_m)$ for all $m\leq 21$, though $\Jac(C_9)$, $\Jac(C_{15})$, and $\Jac(C_{21})$ are degenerate (see  \cite[Section 6]{Shioda82}).

The following results are crucial to our work with the curves $C_p$ and $C_{2p}$.
\begin{prop}\label{prop:shiodadegenerate}\cite[Corollary 5.3]{Shioda82}
If $p\geq 3$ is a prime number, then the Hodge ring $\mathscr B^*(\Jac(C_p))$ is generated by  $\mathscr B^1(\Jac(C_p))$. The same result holds for arbitrary powers of $\Jac(C_p)$.
\end{prop}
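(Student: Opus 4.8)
The plan is to recognize $\Jac(C_p)$ as a CM abelian variety, determine its CM type explicitly, and then invoke Hazama's theorem to reduce the statement --- for $\Jac(C_p)$ and for all of its powers at once --- to a single combinatorial nondegeneracy assertion about that CM type. First I would note that $C_p$ carries the commuting automorphisms $(x,y)\mapsto(\zeta_p x,y)$ and $(x,y)\mapsto(x,-y)$, which generate a faithful action of $\mu_{2p}$, and that on the space of regular differentials, with basis $\{x^{j-1}\,dx/y:1\le j\le g\}$ where $g=(p-1)/2$, the order-$p$ automorphism scales the $j$-th basis element by $\zeta_p^{\,j}$. Since the trivial character of $\mu_p$ does not occur, the idempotent decomposition of $\mathbb{Q}[\mu_p]\cong\mathbb{Q}\times\mathbb{Q}(\zeta_p)$ yields an embedding $\mathbb{Q}(\zeta_p)\hookrightarrow\End^0(\Jac(C_p))$; as $[\mathbb{Q}(\zeta_p):\mathbb{Q}]=p-1=2g$, the homology $H_1(\Jac(C_p),\mathbb{Q})$ is free of rank one over $\mathbb{Q}(\zeta_p)$, so $\Jac(C_p)$ has CM by $\mathbb{Q}(\zeta_p)$ (and is in fact absolutely simple), with CM type equivalent to the ``standard'' type $\Phi=\{1,2,\dots,(p-1)/2\}\subset(\mathbb{Z}/p\mathbb{Z})^\times\cong\Gal(\mathbb{Q}(\zeta_p)/\mathbb{Q})$ read off from the exponents above.

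Next I would apply Hazama's theorem, recalled above: $\Jac(C_p)$ is stably nondegenerate --- that is, $\mathscr{D}^*(\Jac(C_p)^k)=\mathscr{B}^*(\Jac(C_p)^k)$ for every $k\ge 1$, which is precisely what the proposition asserts --- if and only if the Hodge group $\Hg(\Jac(C_p))$ has maximal dimension. Since $\Jac(C_p)$ has CM by $\mathbb{Q}(\zeta_p)$, $\Hg(\Jac(C_p))$ is a subtorus of $\mathrm{Res}_{\mathbb{Q}(\zeta_p)/\mathbb{Q}}\mathbb{G}_m$ cut out by the CM type $\Phi$, and its having maximal dimension is equivalent to the statement that the only $\mathbb{Z}$-linear relations among the characters $\sigma\in\Gal(\mathbb{Q}(\zeta_p)/\mathbb{Q})$, restricted to $\Hg(\Jac(C_p))$, are those generated by the relations $\sigma+\bar\sigma$ attached to the polarization --- in other words, that the CM type $(\mathbb{Q}(\zeta_p),\Phi)$ is nondegenerate.

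The heart of the proof, and the step where the hypothesis that $p$ is \emph{prime} is indispensable, is the verification of this combinatorial nondegeneracy. I would carry it out by examining, for each $t\in(\mathbb{Z}/p\mathbb{Z})^\times$, the balancedness sums $\sum_i\langle t\,a_i\rangle$ attached to a hypothetical Hodge class on some power $\Jac(C_p)^k$ not already accounted for by divisors, and showing that when the modulus is prime these equations force such a class to be a product of divisorial ones; this is exactly the computation behind Shioda's Corollary~5.3, and it can equivalently be performed on the Fermat-type model $u^{2p}-v^{2p}=1$ of $C_p$. I expect this classification to be the genuine obstacle: the analogue fails in the composite case --- $\Jac(C_9)$, $\Jac(C_{15})$, and $\Jac(C_{21})$ are degenerate, as remarked above, the degeneracy of $\Jac(C_9)$ stemming from an absolutely simple degenerate CM threefold with multiplication by $\mathbb{Q}(\zeta_9)$ --- so the argument must truly exploit primality rather than follow from a soft general principle about cyclotomic CM types.
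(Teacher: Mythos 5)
Your reduction is set up correctly: identifying the CM by $\mathbb{Q}(\zeta_p)$ with CM type $\Phi=\{1,\dots,(p-1)/2\}$, invoking Hazama's criterion (stable nondegeneracy $\iff$ the Hodge group has maximal dimension $\iff$, for an absolutely simple CM abelian variety, nondegeneracy of the CM type), and observing that this handles all powers $\Jac(C_p)^k$ simultaneously. This matches the framework the paper itself sketches around Definitions 2.2--2.3. But there is a genuine gap at exactly the point you flag as ``the heart of the proof'': you never actually verify that the CM type $(\mathbb{Q}(\zeta_p),\Phi)$ is nondegenerate for prime $p$. You describe the shape of the computation (balancedness sums $\sum_i\langle t a_i\rangle$, the necessity of primality, the failure for $m=9,15,21$) and then assert that ``this is exactly the computation behind Shioda's Corollary 5.3.'' Since the proposition being proved \emph{is} Shioda's Corollary 5.3, this is circular as written: the entire mathematical content of the statement has been relocated into the one step that is asserted rather than carried out. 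To close the gap you would need to actually prove that the character group of the Hodge torus has the maximal rank $g+1$, e.g.\ by showing that the only integer-valued functions $c$ on $\Gal(\mathbb{Q}(\zeta_p)/\mathbb{Q})$ with $\sum_t c_t\langle ta\rangle$ independent of $a\in(\mathbb{Z}/p\mathbb{Z})^\times$ are the ones spanned by the norm relation $c_t+c_{-t}=\mathrm{const}$; for prime modulus this is a nontrivial argument (going back to Shioda's analysis of Hodge cycles on Fermat-type varieties, and closely related to the linear independence results underlying the Stickelberger ideal), not a formal consequence of the setup.

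For what it is worth, the paper does not prove this proposition either: it imports it verbatim as a citation to \cite[Corollary 5.3]{Shioda82} and builds on it (via Proposition \ref{prop:jacobianpowers}) to deduce Corollary \ref{cor:stabdegenerate}. So your proposal is best read as a correct explanation of \emph{why} the cited result has the form it does and where primality enters, rather than as an independent proof of it.
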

By definition, this tells us that $\Jac(C_p)$ is stably nondegenerate. The nondegenerate CM-type for the curve $C_p$ is  $\{\mathbb{Q}(\zeta_p),\{\sigma_1,\sigma_2,\dots,\sigma_{(p-1)/2}\}\}$, where each $\sigma_t\in \Gal(\mathbb{Q}(\zeta_p)/\mathbb{Q})$ is defined by ${}^{\sigma_t}(\zeta_p)= \zeta_p^t$ (see, for example, \cite[Section 15.4]{ShimuraTaniyama1961}). The CM-type for the curve $C_p$ is primitive (see, for example, \cite{Kubota1965} for a definition) which implies that $\Jac(C_p)$ is absolutely irreducible.

\begin{prop}\label{prop:jacobianpowers}\cite[Lemma 4.1]{EmoryGoodsonPeyrot}
Let $g=2k$ be an even integer, and $C_{2g+2}:y^2= x^{2g+2}+c$, where  $c\in\mathbb Q^{\times}$. Then we have the following isogeny over $\overline{\mathbb Q}$
$$\Jac(C_{2g+2}) \sim \Jac({C_{g+1}})^2,$$
where $C_{g+1}: y^2= x^{g+1}+c$.
\end{prop}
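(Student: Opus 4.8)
The plan is to realize $\Jac(C_{2g+2})$ as isogenous to a product of two quotient Jacobians, each of which is isogenous over $\overline{\mathbb Q}$ to $\Jac(C_{g+1})$. Since $2g+2$ is even, the curve $C_{2g+2}\colon y^2=x^{2g+2}+c$ carries the two commuting involutions $\iota_1\colon(x,y)\mapsto(-x,y)$ and $\iota_2:=\iota_1\circ h\colon(x,y)\mapsto(-x,-y)$, where $h$ is the hyperelliptic involution. Taking the quotient by $\langle\iota_1\rangle$ gives the degree-two map $\phi\colon C_{2g+2}\to C_{g+1}$, $(x,y)\mapsto(x^2,y)$, with $C_{g+1}\colon y^2=u^{g+1}+c$; taking the quotient by $\langle\iota_2\rangle$ gives the degree-two map $\psi\colon C_{2g+2}\to D$, $(x,y)\mapsto(u,w):=(x^2,xy)$, where $D\colon w^2=u^{g+2}+cu$. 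Because $g=2k$ is even, $g+1$ is odd and $g+2$ is even, so the genus formula for hyperelliptic curves gives $\mathrm{genus}(C_{g+1})=\mathrm{genus}(D)=k$ while $\mathrm{genus}(C_{2g+2})=g=2k$.

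Next I would check that $D\cong C_{g+1}$ over $\overline{\mathbb Q}$. The substitution $u=1/t$ followed by $v:=wt^{\,k+1}$ transforms $D$ into $v^2=ct^{g+1}+1$, and rescaling $t$ by a $(g+1)$-th root of $1/c$ brings this to $v^2=t^{g+1}+1$; the analogous rescaling of $C_{g+1}$ (scaling $u$ by a $(g+1)$-th root of $c$ and $y$ by $\sqrt{c}$) brings $C_{g+1}$ to $y^2=u^{g+1}+1$ as well, so $D$ and $C_{g+1}$ are both isomorphic over $\overline{\mathbb Q}$ to this single Fermat-type curve. This is the one step that genuinely requires working over $\overline{\mathbb Q}$; the maps $\phi,\psi$ themselves are defined over $\mathbb Q$, so $(\phi_*,\psi_*)$ below is already a $\mathbb Q$-isogeny onto $\Jac(C_{g+1})\times\Jac(D)$.

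Then I would show that $(\phi_*,\psi_*)\colon\Jac(C_{2g+2})\to\Jac(C_{g+1})\times\Jac(D)$ is an isogeny by proving it is \'etale at the origin, i.e.\ that the induced pullback on holomorphic differentials $\phi^*\oplus\psi^*\colon H^0(C_{g+1},\Omega^1)\oplus H^0(D,\Omega^1)\to H^0(C_{2g+2},\Omega^1)$ is an isomorphism. With the standard bases $\{u^j\,du/y:0\le j\le k-1\}$, $\{u^j\,du/w:0\le j\le k-1\}$, and $\{x^i\,dx/y:0\le i\le g-1\}$, one computes $\phi^*(u^j\,du/y)=2x^{2j+1}\,dx/y$ and $\psi^*(u^j\,du/w)=2x^{2j}\,dx/y$. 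The first family produces the odd-index basis vectors $x,x^3,\dots,x^{2k-1}=x^{g-1}$ and the second produces the even-index vectors $1,x^2,\dots,x^{2k-2}$, so together they exhaust a basis of $H^0(C_{2g+2},\Omega^1)$. Since each pullback is injective (a nonconstant morphism of smooth curves sends nonzero differentials to nonzero differentials) and the two images lie in complementary coordinate subspaces, $\phi^*\oplus\psi^*$ is an isomorphism; hence $(\phi_*,\psi_*)$ is an isogeny. Combined with $\Jac(D)\sim\Jac(C_{g+1})$ over $\overline{\mathbb Q}$, this yields $\Jac(C_{2g+2})\sim\Jac(C_{g+1})^2$ over $\overline{\mathbb Q}$.

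I expect the main obstacle to be the identification in the second step: the quotient $\phi$ by $\iota_1$ is transparent, but the quotient $\psi$ by $\iota_2$ produces the model $w^2=u^{g+2}+cu=u(u^{g+1}+c)$ with an extra rational Weierstrass point at $u=0$, and one must perform the M\"obius move and the two rescalings to recognize it as $C_{g+1}$. One also has to keep the parity hypothesis $g=2k$ in play throughout: it is exactly what makes $(g+2)/2$ an integer in the substitution and what makes the three genus counts fit together, and for odd $g$ the analogous decomposition fails.
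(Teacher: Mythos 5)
Your proof is correct. Note, however, that the paper itself contains no proof of this proposition: it is imported verbatim as \cite[Lemma 4.1]{EmoryGoodsonPeyrot}, so there is no internal argument to compare against. Your route --- quotienting $C_{2g+2}$ by the two involutions $(x,y)\mapsto(-x,y)$ and $(x,y)\mapsto(-x,-y)$ to get $C_{g+1}$ and $D\colon w^2=u^{g+2}+cu$, checking that $(\phi_*,\psi_*)$ is an isogeny by exhibiting the pullbacks $2x^{2j+1}\,dx/y$ and $2x^{2j}\,dx/y$ as a basis of $H^0(C_{2g+2},\Omega^1)$, and then identifying $D$ with $C_{g+1}$ over $\overline{\mathbb Q}$ via the M\"obius substitution $u=1/t$, $v=wt^{k+1}$ --- is the standard Kani--Rosen-style decomposition and is essentially the argument of the cited lemma. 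All the details you supply check out: the genus counts $k+k=g$ use $g=2k$ exactly as you say, the exponent $k+1=(g+2)/2$ in the substitution requires the same parity, and the odd/even split of the monomials $x^i\,dx/y$ makes $\phi^*\oplus\psi^*$ an isomorphism on cotangent spaces, which is what makes the homomorphism of $2k$-dimensional abelian varieties an isogeny.
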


Combining these two results yields the following.
\begin{corollary}\label{cor:stabdegenerate}
For any prime $p\geq 3$, $\Jac(C_p)$ and $\Jac(C_{2p})$ are nondegenerate.
\end{corollary}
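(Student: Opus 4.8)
The statement to prove is that $\Jac(C_p)$ and $\Jac(C_{2p})$ are nondegenerate for every odd prime $p \geq 3$. The two cases are handled separately, though the second reduces to the first.

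For $\Jac(C_p)$, the plan is to invoke Proposition 2.7 directly. That proposition, quoting Shioda's Corollary 5.3, asserts that the Hodge ring $\mathscr{B}^*(\Jac(C_p))$ is generated by $\mathscr{B}^1(\Jac(C_p))$ — i.e., by divisor classes — and that the same holds for arbitrary powers $\Jac(C_p)^k$. By Definition 2.2, saying that $\mathscr{B}^*(A)$ is generated by $\mathscr{B}^1(A)$ is precisely the statement $\mathscr{D}^*(A) = \mathscr{B}^*(A)$, i.e., that $A$ is nondegenerate; and since the statement holds for all powers, $\Jac(C_p)$ is in fact stably nondegenerate (Definition 2.3). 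In particular it is nondegenerate, which is all we need here. So the first case is essentially immediate once the definitions are unwound.

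For $\Jac(C_{2p})$, the plan is to combine Proposition 2.7 with the isogeny of Proposition 2.8. Applying Proposition 2.8 with $2g+2 = 2p$ (so $g+1 = p$, and one should note $g = p-1$ is even since $p$ is odd, which is exactly the parity hypothesis of Proposition 2.8) and $c = -1$, we get an isogeny over $\overline{\mathbb{Q}}$
$$\Jac(C_{2p}) \sim \Jac(C_p)^2.$$
Nondegeneracy is an isogeny invariant — the Hodge ring and the subring generated by divisors are both defined via rational cohomology and are preserved under isogeny — so it suffices to show $\Jac(C_p)^2$ is nondegenerate. But that is exactly the content of the "arbitrary powers" clause of Proposition 2.7 with $k=2$: $\mathscr{D}^*(\Jac(C_p)^2) = \mathscr{B}^*(\Jac(C_p)^2)$.

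The only genuine point requiring care — the main (mild) obstacle — is the verification that nondegeneracy descends across the isogeny and that the hypotheses of Proposition 2.8 are met: one must check the parity condition $g = p-1$ even and record that the isogeny invariance of the equality $\mathscr{D}^* = \mathscr{B}^*$ follows because an isogeny $A \sim B$ induces an isomorphism of $\mathbb{Q}$-Hodge structures $H^*(A,\mathbb{Q}) \cong H^*(B,\mathbb{Q})$ carrying divisor classes to divisor classes, hence an isomorphism of graded rings $\mathscr{B}^*(A) \cong \mathscr{B}^*(B)$ restricting to $\mathscr{D}^*(A) \cong \mathscr{D}^*(B)$. Everything else is a direct citation. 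Stably nondegenerate in fact also follows for $\Jac(C_{2p})$ by the same argument applied to all powers, since $\Jac(C_{2p})^k \sim \Jac(C_p)^{2k}$, but we only assert nondegeneracy in the corollary.
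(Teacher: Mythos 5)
Your proposal is correct and follows essentially the same route as the paper: cite Shioda's result (Proposition \ref{prop:shiodadegenerate}) for $\Jac(C_p)$ and its powers, and reduce $\Jac(C_{2p})$ to $\Jac(C_p)^2$ via the isogeny of Proposition \ref{prop:jacobianpowers}. Your explicit remarks on the parity hypothesis $g=p-1$ and on the isogeny invariance of $\mathscr D^*=\mathscr B^*$ are points the paper's proof passes over silently, but they do not constitute a different argument.
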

\begin{proof}
Note that if $p$ is an odd prime, then we can write $p=2k+1$, for some integer $k$. Hence, $2p=2(2k+1)=2(2k)+2$ and Proposition \ref{prop:jacobianpowers} tells us that for $C_{2p}: y^2=x^{2p}-1$,
$$\Jac(C_{2p})\sim \Jac(C_p)^2.$$
Thus, $\Jac(C_{2p})$ is a power of $\Jac(C_{p})$, and we apply Proposition \ref{prop:shiodadegenerate} to get the desired result.
\end{proof}
An alternative proof that $\Jac(C_p)$ is nondegenerate is Theorem 2 in \cite{Kubota1965}. Note that that curve $C_{2p}$ has CM field $\mathbb Q(\zeta_{4p})$. See the proof of Theorem \ref{thm:ST2p} for the generators of the reduced automorphism group of $C_{2p}$.


\section{The Algebraic Sato-Tate Group}\label{sec:algSTgroup}
We start by defining notation as in \cite{FGL2016} and \cite[Section 3]{SutherlandNotes}. For more detailed background information, see \cite[Section 3.2]{SutherlandNotes}. 

Let $A/ k$ be an abelian variety of dimension $g$ defined over the number field $k$.  Let $\ell$ be a prime and we define the Tate module $T_{\ell}:=\varprojlim_{n} A[\ell^n]$ to be a free $\mathbb{Z}_{\ell}$-module of rank $2g$, and the rational Tate module $V_{\ell}:=T_{\ell}\otimes_{\mathbb{Z}} \mathbb{Q}$ to be a $\mathbb{Q}_{\ell}$-vector space of dimension $2g.$ The Galois action on the Tate module is given by an $\ell$-adic representation $$\rho_{A,\ell}:\Gal(\overline{{k}}/{k}) \rightarrow \Aut(V_{\ell}) \cong \GL_{2g}(\mathbb{Q}_{\ell}).$$
Let $G_{\ell}$ denote the image of this map, and let $G^{Zar}_{\ell}$ be the Zariski closure of $G_{\ell}$ in $\GL_{2g}(\mathbb{Q}_{\ell})$. We then define $G^{1,Zar}_{\ell}:=G^{Zar}_{\ell}\cap \Sp_{2g}(\mathbb{Q}_{\ell})$.
\begin{defn}
The Sato-Tate group of $A$, denoted by $\ST(A)$, is a maximal compact Lie subgroup of $G^{1,Zar}_{\ell}\otimes_{\mathbb{Q}_{\ell}}\mathbb{C}$ contained in $\USp(2g)$ 
\end{defn}

The algebraic Sato-Tate Conjecture for $\Jac(C)$ predicts the existence of an algebraic Sato-Tate group $\AST(\Jac(C))$ of $\Sp_{2g}/\mathbb{Q}$ such that 
$$G_{\ell}^{1,Zar}=\AST(\Jac(C)) \otimes_{\mathbb{Q}}\mathbb{Q}_{\ell}$$
for every prime $\ell$ (see, for example, \cite[Conjecture 2.13]{Fite2012} and \cite[Conjecture 2.1]{Banaszak2015}).

For each $\tau \in \Gal(\overline{\mathbb{Q}}/\mathbb{Q})$, define the set
$$\LL(\Jac(C))(\tau):=\{\gamma \in \Sp_{2g} | \gamma \alpha \gamma^{-1}=\tau(\alpha) \text { for all }\alpha \in \End(\Jac(C)_{\overline{\mathbb{Q}}})\otimes_{\mathbb{Z}} \mathbb{Q}\}$$ where $\alpha$ is viewed as an endomorphism of $H_1((\Jac(C)_{\mathbb{C}},\mathbb{Q}).$

\begin{defn}\label{def:twistedLef}\cite{Banaszak2015}
The twisted Lefschetz  group $\TL(\Jac(C))$ is defined to be 
\begin{equation*}\label{eqn:TL}
\TL(\Jac(C)):=\bigcup_{\tau \in \Gal(\overline{\mathbb{Q}}/\mathbb{Q})} \LL(\Jac C)(\tau).
\end{equation*}

\end{defn}

When $\tau$ is the identity automorphism, $\LL(\Jac(C))(\tau)$ forms a group, called the Lefschetz group, which we denote simply by $\LL(\Jac (C))$.

\begin{prop}\label{prop:AST=TL}
 Let $p$ be an odd prime and $C_p$ be the curve $y^2=x^p-1$. Then the algebraic Sato-Tate Conjecture holds for $\Jac(C_p)$ with $\AST(\Jac(C_p))=\TL(\Jac(C_p))$.
\end{prop}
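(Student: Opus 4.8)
The plan is to invoke the general machinery of Banaszak--Kedlaya together with the nondegeneracy of $\Jac(C_p)$ established in Corollary~\ref{cor:stabdegenerate}. Recall that Banaszak and Kedlaya prove (see \cite[Theorem 6.1, Theorem 6.4]{Banaszak2015}) that the algebraic Sato-Tate conjecture holds, and that $\AST(\Jac(C)) = \TL(\Jac(C))$, for any abelian variety whose Hodge group coincides with its Lefschetz group; equivalently, for any abelian variety satisfying the \emph{Mumford--Tate conjecture} together with the condition that the algebraic cycle classes on all powers of $\Jac(C)$ are generated by divisor classes. So the first step is to reduce Proposition~\ref{prop:AST=TL} to verifying those two hypotheses for $\Jac(C_p)$.

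First I would address the Mumford--Tate (or Hodge) side. Because $\Jac(C_p)$ has complex multiplication by $\mathbb{Q}(\zeta_p)$, it is of abelian type, and the Mumford--Tate conjecture is known for abelian varieties with CM (this goes back to the work surrounding \cite{Joh17} and is classical for CM abelian varieties; see also the references in \cite{Banaszak2015, FGL2016}). Thus $G_\ell^{Zar}$ is the base change of the Mumford--Tate group, and $G_\ell^{1,Zar}$ is the base change of the Hodge group $\Hg(\Jac(C_p))$. Second I would feed in nondegeneracy: by Proposition~\ref{prop:shiodadegenerate}, the Hodge ring $\mathscr{B}^*(\Jac(C_p)^k)$ is generated by divisor classes for every $k \geq 1$, so $\Jac(C_p)$ is stably nondegenerate, which by Hazama's Theorem~1.2 in \cite{Hazama89} means the Hodge group has maximal dimension --- precisely the condition that $\Hg(\Jac(C_p)) = \LL(\Jac(C_p))$ (the Lefschetz group is the centralizer, in $\Sp_{2g}$, of the endomorphism algebra, and coincides with the Hodge group exactly when the latter is as large as possible given the endomorphisms).

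Putting these together: on the one hand $G_\ell^{1,Zar} = \Hg(\Jac(C_p)) \otimes_{\mathbb{Q}} \mathbb{Q}_\ell = \LL(\Jac(C_p)) \otimes_{\mathbb{Q}} \mathbb{Q}_\ell$, so the algebraic Sato-Tate group exists and its identity component is $\LL(\Jac(C_p))$. On the other hand, a result of Banaszak--Kedlaya (\cite[Theorem 6.1]{Banaszak2015}) identifies $\AST(\Jac(C_p))^0$ with the Lefschetz group and $\AST(\Jac(C_p))$ with the full twisted Lefschetz group $\TL(\Jac(C_p))$ precisely under the hypothesis that the connected component is the Lefschetz group --- i.e.\ under the nondegeneracy we have just verified. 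This yields $\AST(\Jac(C_p)) = \TL(\Jac(C_p))$, as claimed; one should also remark that $\TL(\Jac(C_p))$ is indeed an algebraic subgroup of $\Sp_{2g}/\mathbb{Q}$ of the required type, since it is a finite union of $\LL$-cosets indexed by $\Gal(K/\mathbb{Q})$ where $K$ is the (finite) field of definition of the endomorphisms.

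I expect the main subtlety to be bookkeeping rather than mathematics: making sure the hypotheses of the cited Banaszak--Kedlaya theorem are stated in a form that matches what Propositions~\ref{prop:shiodadegenerate} and \ref{prop:jacobianpowers} give us (stable nondegeneracy of $\Jac(C_p)$, equivalently maximality of the Hodge group), and correctly invoking the Mumford--Tate conjecture in the CM case so that the passage between $G_\ell^{1,Zar}$ and $\Hg(\Jac(C_p))$ is legitimate. No genuinely hard new input is needed beyond assembling these pieces.
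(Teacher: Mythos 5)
Your proposal is correct, and its overall skeleton matches the paper's: both reduce the claim to the criterion that the Mumford--Tate conjecture holds for $\Jac(C_p)$ (automatic for CM abelian varieties) together with the equality $\Hg(\Jac(C_p))=\LL(\Jac(C_p))$, and both then invoke the Banaszak--Kedlaya/Fit\'e framework to conclude $\AST(\Jac(C_p))=\TL(\Jac(C_p))$. Where you diverge is in the verification of $\Hg(\Jac(C_p))=\LL(\Jac(C_p))$. You argue abstractly: stable nondegeneracy (Proposition~\ref{prop:shiodadegenerate}) implies, by Hazama's Theorem~1.2, that the Hodge group has maximal dimension, and since $\Hg\subseteq\LL$ with both connected and $\LL$ realizing that maximal dimension for a simple CM abelian variety, the two coincide. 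That step is fine, but you should make explicit that it uses the simplicity of $\Jac(C_p)$ (which holds because $p$ is prime) to identify ``maximal dimension'' with $\dim\LL=g$; for non-simple CM varieties the bookkeeping is more delicate, which is exactly why the paper handles $C_{2p}$ separately in Corollary~\ref{cor:AST=TL2p}. The paper instead proves the equality by explicit computation: it applies \cite{Banaszak2003} (again using simplicity and the nondegenerate CM type) to write $G^{1,Zar,0}_\ell$ as the diagonal torus $\{\diag(x_1,y_1,\dots,x_g,y_g):x_iy_i=1\}$ for $\ell$ split in $\mathbb{Q}(\zeta_p)$, computes $\LL(\Jac(C_p))\otimes\mathbb{Q}_\ell$ to be the same torus, and squeezes the Hodge group in between via Deligne's inclusions. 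The explicit route costs a computation but buys the concrete description of the identity component in Equation~\eqref{equation:GZar0}, which the paper reuses immediately in Proposition~\ref{prop:idcomponent} to identify $\ST^0(\Jac(C_p))\simeq\U(1)^g$; your more abstract route proves the proposition as stated but would leave that later identification still to be done.
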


\begin{proof}
This  follows from \cite[Theorem 6.6]{Banaszak2015} since $\Jac(C_p)$ is a nondegenerate CM abelian variety. Still, we include a proof that is similar to the proof of \cite[Lemma 3.5]{FGL2016} for the sake of completion. By \cite[Theorem 2.16(a)]{Fite2012}, we need to verify two criteria: the Hodge group $\Hg(\Jac(C_p))$ equals the Lefschetz group $\LL(\Jac(C_p))$, and that the Mumford-Tate Conjecture holds for $\Jac(C_p)$. The Mumford-Tate Conjecture is known to be true for CM abelian varieties (see, for example, \cite{FGL2016, Pohlmann1968, Yu2015}), so we only need to verify the first of the criteria. 

By Deligne  \cite[I, Proposition 6.2]{Del82} and \cite[Definition 4.4]{Banaszak2015}, we have
\begin{equation}\label{ineq:GZar0}
G^{1,Zar,0}_{\ell}(\Jac(C_p)) \subseteq \Hg(\Jac(C_p))\otimes_{\mathbb{Q}} \mathbb{Q}_\ell \subseteq \LL(\Jac(C_p)) \otimes _{\mathbb{Q}} \mathbb{Q}_\ell   
\end{equation}  
for every prime $\ell$. We will show that $G^{1,Zar,0}_{\ell}(\Jac(C_p))=\LL(\Jac(C_p)) \otimes _{\mathbb{Q}} \mathbb{Q}_\ell$ to obtain the desired result. Note that it is sufficient to show this for any prime $\ell$.

Since $p$ is prime, $\Jac(C_p)$ is simple (see, for example, \cite[Section 15.4]{ShimuraTaniyama1961}). Furthermore, Proposition \ref{prop:shiodadegenerate} tells us that $\Jac(C_p)$ has nondegenerate CM-type.  We apply the results of Section 2 of \cite{Banaszak2003} to get, for every prime $\ell$ of good reduction for which $\Jac(C_p)$  splits completely in $\mathbb Q(\zeta_p)$, 
\begin{equation}\label{equation:GZar0}
G_{\ell}^{1,Zar,0}(\Jac(C_p))=\{\diag( x_1,y_1,\dots,x_{g},y_{g}) \in \mathbb{Q}_{\ell}^{\times}~|~x_1y_1=\cdots=x_{g}y_{g}=1\},
\end{equation}
where $g=(p-1)/2$ is the genus of $C_p$. 

We now compute the Lefschetz group $\LL(\Jac(C))\otimes _{\mathbb{Q}} \mathbb{Q}_{\ell}$. In order for a matrix $\gamma\in\Sp_{2g}$ to commute with any matrix $\alpha \in \End(H_1(\Jac(C_p)_{\mathbb{C}},\mathbb{C}))$, it must be diagonal. Hence,
$$\LL(\Jac(C_p)) \otimes_{\mathbb{Q}} \mathbb{Q}_{\ell}=\{\diag( x_1,y_1,\dots,x_{g},y_{g}) \in \mathbb{Q}_{\ell}^{\times}~|~x_1y_1=\cdots=x_{g}y_{g}=1\},$$
which yields the desired result.
\end{proof}

\begin{corollary}\label{cor:AST=TL2p}
 If $p$ is an odd prime then the algebraic Sato-Tate Conjecture holds for $\Jac(C_{2p})$ with $\AST(\Jac(C_{2p}))=\TL(\Jac(C_{2p}))$.
\end{corollary}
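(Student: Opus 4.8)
The plan is to run the argument of Proposition~\ref{prop:AST=TL} essentially verbatim. First, I would invoke the criterion of \cite[Theorem 2.16(a)]{Fite2012} (equivalently, Theorem 6.6 of \cite{Banaszak2015}): the algebraic Sato--Tate Conjecture holds for $\Jac(C_{2p})$ with $\AST(\Jac(C_{2p}))=\TL(\Jac(C_{2p}))$ provided that (i) the Mumford--Tate Conjecture holds for $\Jac(C_{2p})$, and (ii) the Hodge group $\Hg(\Jac(C_{2p}))$ equals the Lefschetz group $\LL(\Jac(C_{2p}))$. So the task reduces to checking these two conditions.

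Condition (i) is immediate: as noted after Corollary~\ref{cor:stabdegenerate}, $\Jac(C_{2p})$ has complex multiplication (with CM field $\mathbb Q(\zeta_{2p},i)$), and the Mumford--Tate Conjecture is known for CM abelian varieties (see, e.g., \cite{FGL2016, Pohlmann1968, Yu2015}). For condition (ii) I would exploit the $\overline{\mathbb Q}$-isogeny $\Jac(C_{2p})\sim \Jac(C_p)^2$ coming from Proposition~\ref{prop:jacobianpowers} (as used in the proof of Corollary~\ref{cor:stabdegenerate}). Both the Hodge group and the Lefschetz group are invariants of the isogeny class, and for a self-product $B^2$ they are canonically identified with $\Hg(B)$ and $\LL(B)$ under the splitting $H_1(B^2,\mathbb Q)\cong H_1(B,\mathbb Q)^{\oplus 2}$; hence the equality $\Hg(\Jac(C_p))=\LL(\Jac(C_p))$ established inside the proof of Proposition~\ref{prop:AST=TL} transports to $\Hg(\Jac(C_{2p}))=\LL(\Jac(C_{2p}))$. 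Alternatively one can argue directly: Proposition~\ref{prop:shiodadegenerate} gives that the Hodge ring of every power of $\Jac(C_p)$ is generated in codimension one, so $\Jac(C_{2p})$ is stably nondegenerate, and by Hazama's theorem (Theorem 1.2 of \cite{Hazama89}) this forces $\Hg(\Jac(C_{2p}))$ to have maximal dimension, which together with the standard containment $\Hg(\Jac(C_{2p}))\subseteq \LL(\Jac(C_{2p}))$ yields equality.

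The only point requiring care is that, unlike $\Jac(C_p)$, the abelian variety $\Jac(C_{2p})$ is not simple: its geometric endomorphism algebra is $M_2(\mathbb Q(\zeta_p))$ rather than a CM field, so the explicit torus computations of \cite[Section 2]{Banaszak2003} invoked in Proposition~\ref{prop:AST=TL} do not apply to it directly. This is precisely why the argument above is routed through the simple factor $\Jac(C_p)$ and the functoriality of $\Hg$ and $\LL$ under isogeny and powers, which is legitimate because the criterion of \cite[Theorem 2.16(a)]{Fite2012} only involves these geometric invariants together with the Mumford--Tate Conjecture, all of which are insensitive both to the non-simplicity of $\Jac(C_{2p})$ and to the passage between $C_{2p}$ and $C_p$. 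I expect this bookkeeping --- keeping track of which statements are geometric and isogeny-invariant and which depend on the $\mathbb Q$-structure and Galois action --- to be the main (and rather modest) obstacle.
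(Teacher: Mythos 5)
Your proposal is correct and follows essentially the same route as the paper: reduce to $\Jac(C_p)$ via the isogeny $\Jac(C_{2p})\sim\Jac(C_p)^2$ from Proposition~\ref{prop:jacobianpowers}, use nondegeneracy and the CM property, and conclude that the inclusions $G^{1,Zar,0}_{\ell}\subseteq \Hg\otimes\mathbb{Q}_\ell\subseteq \LL\otimes\mathbb{Q}_\ell$ are equalities, while correctly flagging (as the paper does in the remark following the corollary) that the torus computation of \cite{Banaszak2003} cannot be applied directly because $\Jac(C_{2p})$ is not simple. If anything, your explicit appeal to the invariance of $\Hg$ and $\LL$ under passage to the square (or to stable nondegeneracy plus Hazama's theorem) supplies detail that the paper's proof leaves implicit.
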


\begin{proof}
Recall from Proposition \ref{prop:jacobianpowers} that 
$$\Jac(C_{2p})\sim (\Jac(C_{p}))^2.$$
Corollary \ref{cor:stabdegenerate} tells us that both $\Jac(C_{2p})$ and $\Jac(C_{p})$ are nondegenerate. Furthermore, they are both abelian varieties with CM. Hence, as in the proof of Lemma 3.5 of \cite{FGL2016}, proving the inclusions in Equation \eqref{ineq:GZar0} were actually equalities gives us
\begin{equation*}
G^{1,Zar,0}_{\ell}(\Jac(C_{2p})) = \Hg(\Jac(C_{2p}))\otimes_{\mathbb{Q}} \mathbb{Q}_\ell = \LL(\Jac(C_{2p})) \otimes _{\mathbb{Q}} \mathbb{Q}_\ell.   
\end{equation*}
\end{proof}

Note that we cannot apply Theorem A of \cite{Banaszak2003} to determine $G^{1,Zar,0}_{\ell}(\Jac(C_{2p}))$ since $\Jac(C_{2p})$ is not simple. We will determine the identity component of the Sato-Tate group of $\Jac(C_{2p})$ using another method in Section \ref{sec:2p}.

\begin{corollary}\label{cor:STgroupofcomponents} The group of components of $G_{\ell}^{1,Zar}(\Jac(C_p))$ and $\AST(\Jac(C_p))$ are isomorphic to $\Gal(\mathbb Q(\zeta_p)/\mathbb{Q})$. Also, the group of components of $G_{\ell}^{1,Zar}(\Jac(C_{2p}))$ and $\AST(\Jac(C_{2p}))$ are isomorphic to $\Gal(\mathbb Q(\zeta_{4p})/\mathbb{Q})$.
\end{corollary}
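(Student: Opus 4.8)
The plan is to identify the component groups on both sides by leveraging the structure already established. For $\Jac(C_p)$, we know from Proposition \ref{prop:AST=TL} that $\AST(\Jac(C_p)) = \TL(\Jac(C_p))$ and that $G_{\ell}^{1,Zar}(\Jac(C_p)) = \AST(\Jac(C_p)) \otimes_{\mathbb Q} \mathbb Q_\ell$, so it suffices to compute $\pi_0(\TL(\Jac(C_p)))$. First I would recall that $K = \mathbb Q(\zeta_p)$ is the minimal field over which all endomorphisms of $\Jac(C_p)$ are defined: the CM field is $\mathbb Q(\zeta_p)$ and, since the CM is by the full ring of integers and the reflex field equals $\mathbb Q(\zeta_p)$ (this extension being abelian), $\End(\Jac(C_p)_{\overline{\mathbb Q}}) = \End(\Jac(C_p)_{\mathbb Q(\zeta_p)})$. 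Consequently, for $\tau \in \Gal(\overline{\mathbb Q}/\mathbb Q)$, the set $\LL(\Jac(C_p))(\tau)$ depends only on the restriction of $\tau$ to $\mathbb Q(\zeta_p)$, giving a well-defined surjection from $\Gal(\mathbb Q(\zeta_p)/\mathbb Q)$ to the set of nonempty cosets $\LL(\Jac(C_p))(\tau)$ in $\TL(\Jac(C_p))/\LL(\Jac(C_p))$.

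Next I would check that this map is a group homomorphism and that it is injective. Surjectivity onto $\pi_0$ amounts to showing each $\LL(\Jac(C_p))(\tau)$ is nonempty, i.e. that every $\tau$-conjugation automorphism of the endomorphism algebra is realized by some symplectic similitude — this holds because $\End^0(\Jac(C_p)_{\overline{\mathbb Q}}) \cong \mathbb Q(\zeta_p)$ acts on $V_\ell$ (or on $H_1$) with the CM structure, and the Galois action permutes the CM-type embeddings, which can be implemented by a permutation-type symplectic matrix; this is precisely the kind of computation carried out in Section \ref{sec:STgroups} for the generators. Injectivity follows because if $\tau$ acts trivially on $\mathbb Q(\zeta_p)$ then $\LL(\Jac(C_p))(\tau) = \LL(\Jac(C_p))$ is the identity component (a connected torus, by the explicit diagonal description in Equation \eqref{equation:GZar0}). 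Combining these, $\pi_0(\TL(\Jac(C_p))) \cong \Gal(\mathbb Q(\zeta_p)/\mathbb Q)$, and since tensoring with $\mathbb Q_\ell$ does not change the component group, the same holds for $G_\ell^{1,Zar}(\Jac(C_p))$.

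For $\Jac(C_{2p})$ the argument is structurally identical, using Corollary \ref{cor:AST=TL2p} in place of Proposition \ref{prop:AST=TL}, together with the fact noted after Corollary \ref{cor:stabdegenerate} that the CM field of $C_{2p}$ is $\mathbb Q(\zeta_{2p}, i)$; one must verify that $K_{C_{2p}} = \mathbb Q(\zeta_{2p}, i)$, which uses the description of the reduced automorphism group of $C_{2p}$ referenced in the proof of Theorem \ref{thm:ST2p}. I expect the main obstacle to be the nonemptiness/surjectivity step: showing that for every $\tau \in \Gal(K/\mathbb Q)$ there genuinely exists a $\gamma \in \Sp_{2g}$ conjugating the endomorphism algebra by $\tau$. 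This is not automatic from the abstract setup — it requires knowing the Galois action on $\End^0$ concretely and producing an explicit symplectic intertwiner — but it is essentially the content of the component-group computations in Section \ref{sec:STgroups}, so the cleanest exposition may simply cite those computations forward, or prove nonemptiness abstractly via the fact that a CM abelian variety over $\mathbb Q$ has all its twists by $\Gal(K/\mathbb Q)$ realized inside the twisted Lefschetz group by Banaszak--Kedlaya's results.
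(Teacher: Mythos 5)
Your argument is correct in substance, but it takes a genuinely different route from the paper. The paper's proof is a two-line citation: Proposition \ref{prop:AST=TL} identifies $\AST(\Jac(C_p))$ with $\TL(\Jac(C_p))$ and $G_\ell^{1,Zar}$ with $\AST\otimes\mathbb Q_\ell$, and then \cite[Prop.\ 2.17]{Fite2012} is invoked as a black box to identify the component group of $G_\ell^{1,Zar}$ with $\Gal(K/\mathbb Q)$. You instead compute $\pi_0(\TL)$ by hand: the map $\TL(\Jac(C))\to\Gal(K/\mathbb Q)$ sending $\gamma\in\LL(\Jac(C))(\tau)$ to $\tau|_K$ is a well-defined homomorphism whose kernel $\LL(\Jac(C))$ is a connected torus by the diagonal description in the proof of Proposition \ref{prop:AST=TL}, so the components biject with the image, and the only real work is showing each $\LL(\Jac(C))(\tau)$ is nonempty. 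You correctly isolate that nonemptiness as the one nontrivial step, and both of your proposed remedies work; this buys a self-contained proof at the cost of more verification, whereas the paper's citation gets the isomorphism for free (FKRS prove $\pi_0(G_\ell^{1,Zar})\cong\Gal(K/k)$ unconditionally from the $\ell$-adic representation, which in particular handles nonemptiness without exhibiting explicit symplectic intertwiners). Two cautions if you pursue your route. First, your map is stated backwards in places ("surjection from $\Gal(\mathbb Q(\zeta_p)/\mathbb Q)$ to the set of nonempty cosets"): the natural homomorphism goes from $\TL$ to $\Gal(K/\mathbb Q)$, injectivity on $\pi_0$ is the connectedness of $\LL$, and surjectivity is the nonemptiness claim. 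Second, forward-citing Section \ref{sec:STgroups} for nonemptiness risks circularity, since Theorems \ref{theorem:STp} and \ref{thm:ST2p} themselves use this corollary to bound the component group from above; you may only import the verification that the explicit matrices $\gamma,\gamma'$ lie in the relevant $\LL(\tau)$ (which is independent of the corollary), or else use the abstract Galois-image argument (normalized images of Frobenius elements conjugate endomorphisms by the corresponding $\tau$), which avoids the issue entirely.
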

\begin{proof} This follows from Proposition \ref{prop:AST=TL} 
 (see  \cite[Prop 2.17]{Fite2012}).
\end{proof}
\begin{remark*}
Although \cite[Prop 2.17]{Fite2012} is stated for $g \leq 3$, Proposition \ref{prop:AST=TL} is for curves of arbitrarily high genus, and since the Mumford-Tate conjecture holds for $\Jac(C_p)$ the requirement that $g\leq 3$ in \cite[Prop 2.17]{Fite2012} can be removed for the proof of Corollary \ref{cor:STgroupofcomponents}.
\end{remark*}
It is known that when the algebraic Sato-Tate conjecture holds, we may interpret the Sato-Tate group $\ST(\Jac(C))$ as a maximal compact subgroup of $\AST(\Jac(C))\otimes_{\mathbb{Q}}{\mathbb{C}}$ (see, for example, \cite[Section 2.2]{Fite2012}).


\section{Sato-Tate Groups}\label{sec:STgroups}

In this section we compute the Sato-Tate groups of the Jacobians of the curves $C_p:\; y^2=x^p-1$ and $C_{2p}:\; y^2=x^{2p}-1$. For both families of curves, we obtain the component group of the Sato-Tate group by computing the twisted Lefschetz groups (recall the results of Proposition \ref{prop:AST=TL} and Corollary \ref{cor:AST=TL2p}).


\subsection{The Sato-Tate Group of $y^2=x^p-1$}\label{sec:STgroupp}

We first determine the identity component of the Sato-Tate group.

\begin{prop}\label{prop:idcomponent} If $p$ is an odd prime then  
$$\ST^0(\Jac(C_p))\simeq \U(1)^{g}$$
where $g=(p-1)/2$ is the genus of $C_p$.
\end{prop}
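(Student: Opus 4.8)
The plan is to identify $\ST^0(\Jac(C_p))$ by combining the structural results already established in Section \ref{sec:algSTgroup} with the standard dictionary between the identity component of the Sato-Tate group and the identity component of the algebraic Sato-Tate group. By Proposition \ref{prop:AST=TL} and the remark at the end of Section \ref{sec:algSTgroup}, $\ST(\Jac(C_p))$ is a maximal compact subgroup of $\AST(\Jac(C_p))\otimes_{\mathbb Q}\mathbb C = G_\ell^{1,Zar}(\Jac(C_p))\otimes_{\mathbb Q_\ell}\mathbb C$, and taking identity components commutes with taking maximal compact subgroups. So it suffices to compute a maximal compact subgroup of $G_\ell^{1,Zar,0}(\Jac(C_p))\otimes_{\mathbb Q_\ell}\mathbb C$.

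First I would invoke Equation \eqref{equation:GZar0}, which gives
$$G_\ell^{1,Zar,0}(\Jac(C_p)) = \{\diag(x_1,y_1,\dots,x_g,y_g) : x_iy_i = 1 \text{ for all } i\},$$
a torus of rank $g$. Base-changing to $\mathbb C$ and then passing to a maximal compact subgroup inside $\USp(2g)$, the symplectic condition together with unitarity forces each pair $(x_i,y_i)$ to have the form $(u_i,\overline{u_i})$ with $|u_i|=1$: indeed, the relation $x_iy_i=1$ combined with $|x_i|=1$ (from the compactness/unitarity constraint) gives $y_i = x_i^{-1} = \overline{x_i}$. This is exactly the group $\U(1)^g$ as defined in the notation section. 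I would make sure to state clearly that the alternating form is taken so that the symplectic blocks pair the $i$-th and $(i+g)$-th, or rather the consecutive, coordinates, matching the ordering $\diag(x_1,y_1,\dots,x_g,y_g)$ used in \eqref{equation:GZar0} and in the definition of $\U(1)^n$.

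The main obstacle I anticipate is not any deep argument but rather bookkeeping: ensuring that the embedding of the torus into $\Sp_{2g}$ (equivalently the choice of symplectic basis) is consistent across Equation \eqref{equation:GZar0}, the passage to a maximal compact subgroup, and the definition of $\U(1)^g$, so that the isomorphism is genuinely an equality of subgroups of $\USp(2g)$ up to the appropriate conjugation. A secondary point worth a sentence is connectedness: $\U(1)^g$ is connected, and since $G_\ell^{1,Zar,0}$ is connected by definition, its maximal compact subgroup is connected, so there is no discrepancy between "the maximal compact of the identity component" and "the identity component of the Sato-Tate group." I would also cite the fact that the CM-type of $C_p$ is nondegenerate (Proposition \ref{prop:shiodadegenerate}) to justify that \eqref{equation:GZar0} indeed gives the full identity component and not merely a subgroup, which is what makes the rank exactly $g$ rather than something smaller.
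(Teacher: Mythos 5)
Your proposal is correct and follows essentially the same route as the paper: invoke Proposition \ref{prop:AST=TL} together with Equation \eqref{equation:GZar0} to identify $\AST^0(\Jac(C_p))\otimes_{\mathbb Q}\mathbb C$ as the rank-$g$ torus $\{\diag(x_1,y_1,\dots,x_g,y_g): x_iy_i=1\}$, and take its maximal compact subgroup $\U(1)^g$. The paper's proof is terser, but your added remarks on connectedness, the consistency of the symplectic basis, and the role of nondegeneracy in guaranteeing the full rank are all correct elaborations of the same argument.
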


\begin{proof}
Let $\ell$ be a prime, and take an embedding of $\mathbb{Q}_{\ell}$ into the complex numbers.  By definition, $\ST^0(\Jac(C))$ is a maximal compact subgroup of $\AST^0(\Jac(C))\otimes_{\mathbb{Q}}  \mathbb{C}.$ From Proposition \ref{prop:AST=TL} and Equation \eqref{equation:GZar0}, it follows that we can take the maximal compact subgroup $\U(1)^g$.
\end{proof}

\begin{remark*}
Proposition \ref{prop:idcomponent} could also be derived from \cite{FGL2016} where they consider the curve $\mathcal{C}_k:v^{\ell}=u(u+1)^{\ell-k-1}$.  If we let $k=p-2$ and $\ell=p$, then the curve $\mathcal{C}_{p-2}$ is isomorphic to $C_p$ over the field $\mathbb{Q}(4^{1/p},i)$. This immediately gives the identity component of the Sato-Tate group of the Jacobian of $C_p$ since the connected component only depends on the variety over $\overline{\mathbb Q}$. 

\end{remark*}
The main result of the following theorem is determining the component group of the Sato-Tate group of $\Jac(C_p)$.  Explicit examples of the generator of the component group are given in Table \ref{table:gammas} in Appendix \ref{app:component}.

\begin{theorem}\label{theorem:STp}
Let $S=\{1, \ldots, g\}$ and let  $a$ be a generator of the cyclic group $(\mathbb Z/p\mathbb Z)^{\times}$. Up to conjugation in $\USp(2g)$, 
$$\ST(\Jac(C_p))= \left\langle \U(1)^g, \gamma \right\rangle,$$
where $\gamma$ is a $2g\times 2g$ matrix whose block entries are given by
\begin{equation}\label{eqn:gammamatrix}
\gamma_{i,j} = \begin{cases}
I & \text{if } j=\langle ai\rangle_p \text{ and } \langle ai\rangle_p \in S,\\
J & \text{if } j=p-\langle ai\rangle_p \text{ and } \langle ai\rangle_p \not\in S,\\
0 & \text{otherwise.}
\end{cases}\end{equation}
Furthermore, there is an isomorphism $$\ST(\Jac(C_p))\simeq \U(1)^g\rtimes (\mathbb Z/p\mathbb Z)^{\times}.$$
\end{theorem}

\begin{proof}
We compute the twisted Lefschetz group of $\Jac(C_p)$. Applying Proposition \ref{prop:AST=TL} then yields the desired result.

  We can identify the group $G=(\mathbb Z/p\mathbb Z)^{\times}$ with $\Gal(\mathbb Q(\zeta_p)/\mathbb Q)$ via the isomorphism that maps $t\in G$ to the Galois element $\sigma_t$,
where ${}^{\sigma_t}(\zeta_p):=\zeta_p^t$. 

A basis for the space of regular $1$-forms of a genus $g$ hyperelliptic curve   is given by  $\{\omega_j=x^j dx/y \;:\; j=0,\cdots, g-1\}$ (see, for example, \cite[Section 3]{VanWamelen1998}). We consider the automorphism $\alpha:C_p\rightarrow C_p$ defined by $\alpha(x,y)=(\zeta_p x,y)$, and compute the pullbacks of the differentials to be
 $$\alpha^*(\omega_j)=\zeta_p^{j+1}\omega_j .$$ 

We now write the endomorphism $\alpha \in \End(\Jac(C_K))$ in terms of a symplectic basis of $H_1(\Jac(C_p)_\mathbb{C},\mathbb C)$ (with respect to the matrix $\diag(J)$) and get the diagonal matrix
$\alpha=\diag(X_1, X_2,\ldots, X_g),$
where each $X_i$ is a block matrix defined by
$$X_i:=\diag\left(\zeta_p^i,\overline{{\zeta_p}^i}\right).$$

Let $\sigma_a$ be a generator for the cyclic Galois group $\Gal(\mathbb Q(\zeta_p)/\mathbb Q)\simeq (\mathbb Z/p\mathbb Z)^{\times}$. Since the action of the Galois element $\sigma_a$ is given by ${}^{\sigma_a}(\zeta_p)=\zeta_p^{a}$, we have
$${}^{\sigma_a}X_i=\diag\left(\zeta_p^{ai},\overline{{\zeta_p}^{ai}}\right).$$
Hence, letting $S=\{1, \ldots, g\}$, we can write 
$${}^{\sigma_a}X_i=\begin{cases}
X_{\langle ai\rangle_p} & \text{if } \langle ai\rangle_p\in S,\\
\overline{X_{p-\langle ai\rangle_p}} & \text{if } \langle ai\rangle_p\not\in S,
\end{cases}$$
where
$$\overline{X_m}:=\diag\left(\overline{{\zeta_p}^m},{\zeta_p}^m\right).$$
Note that $JX_m(-J)=\overline{X_m}$. This characterization allows to express each ${}^{\sigma_a}X_i$ in the form $X_j$ or $\overline{X_j}$, for some $1\leq j\leq g$.

We will now verify that $\gamma\alpha\gamma^{-1}={}^{\sigma_a}\alpha$, where  $\gamma$ is as defined in Equation \eqref{eqn:gammamatrix}. Note that there is only one nonzero block entry in each row and each column in the block matrix $\gamma$. Furthermore, one easily checks that the entries of the inverse of $\gamma$ are given by
$${\gamma^{-1}}_{j,i} = \begin{cases}
I & \text{if } j=\langle ai\rangle_{p} \text{ and } \langle ai\rangle_{p} \in S,\\
-J & \text{if } j=p-\langle ai\rangle_{p} \text{ and } \langle ai\rangle_{p} \not\in S,\\
0 & \text{otherwise.}
\end{cases}$$

Some basic linear algebra shows that the only nonzero blocks in the product $\gamma\alpha\gamma^{-1}$ will be the diagonal entries. We will now determine what those diagonal entries will be. Suppose that the only nonzero block in column $j$ of $\gamma$ is in row $i$. Based on the definitions of $\gamma$ and $\gamma^{-1}$, this nonzero entry will yield the following product in the $i$th diagonal entry of $\gamma\alpha\gamma^{-1}$
$$\gamma_{i,j}X_j{\gamma^{-1}}_{j,i}=\begin{cases}
X_j & \text{if } j=\langle ai\rangle_{p} \text{ and } \langle ai\rangle_{p} \in S,\\
\overline{X_j} & \text{if } j=p-\langle ai\rangle_{p} \text{ and } \langle ai\rangle_{p} \not\in S.\\
\end{cases}$$
Hence, $\gamma\alpha\gamma^{-1}={}^{\sigma_a}\alpha$, which confirms that $\gamma$ is an element of the twisted Lefschetz group. 

We now show that $\gamma^{p-1}\in\ST^0(\Jac(C_p))$, but $\gamma^d\not \in \ST^0(\Jac(C_p))$ for any proper divisor $d$ of $p-1$, which will prove that $\ST(\Jac(C_p))=\langle \U(1)^g,\gamma\rangle\simeq \U(1)^g \rtimes (\mathbb Z/p\mathbb Z)^{\times}.$

Since $\sigma_a$ generates the Galois group $\Gal(\mathbb Q(\zeta_p)/\mathbb Q)$, we have $|\sigma_a|=2g$ and, for $1\leq d\leq2g$,
$$
(\sigma_a)^{d}(\zeta_p)=\begin{cases}
\zeta_p & \text{if } d=2g,\\
\overline{\zeta_p} & \text{if } d=g,
\end{cases}
$$
and $(\sigma_a)^{d}(\zeta_p)\not=\zeta_p$ nor $\overline\zeta_p$ otherwise. Hence, the action of $\sigma_a$ on the block matrix $X_i$ satisfies
$$ (\sigma_a)^{d}(X_i)=\begin{cases}
X_i & \text{if } d=2g,\\
\overline{X_i} & \text{if } d=g,\\
X_j \text{ or } \overline{X_j} & \text{otherwise},
\end{cases}$$
for some $j \in \{1, \ldots, g\}$ not equal to $i$.

We have seen that $\gamma \alpha \gamma^{-1}={}^{\sigma_a}\alpha$, and so conjugating $\alpha$ by $\gamma$ permutes (and sometimes conjugates) the diagonal block entries of $\alpha$. Since $\gamma \alpha \gamma^{-1}$ is again a diagonal block matrix, conjugating this by $\gamma$ will again just permute (and sometimes conjugate) the diagonal block entries. Hence, $\gamma^d\alpha\gamma^{-d}$ is a diagonal block matrix for any $d$. In fact, we can write $\gamma^d\alpha\gamma^{-d} ={}^{(\sigma_a)^d}\alpha$.

Thus, $\gamma^d$ has a nonzero, off-diagonal block entry if and only if there is some $i$ for which ${}^{(\sigma_a)^d}X_i=X_j$ or $\overline{X_j}$ with $j\not=i, p-i$. This is possible if and only if $d\not=2g$ or $g$.

If $d=g$, then ${}^{(\sigma_a)^d}X_i=\overline{X_i}$ for all $i$. Hence, all of the diagonal block entries of  $\gamma^g$ must be $J$ or $-J$ since $JX_i(-J)=-JX_iJ=\overline{X_i}$. Thus, $\gamma^g\not\in\ST^0(\Jac(C_p))$. However, $J^2=(-J)^2=-I$, so $\gamma^{2g}=-\Id$, which is an element of $\ST^0(\Jac(C_p))$. Thus,
$\ST(\Jac(C_p))\simeq \U(1)^g \rtimes (\mathbb Z/p\mathbb Z)^{\times}.$

\end{proof}

\subsection{The Sato-Tate Group of $y^2=x^{2p}-1$}\label{sec:2p}
  We use the results of Section \ref{sec:nondegen} and Proposition \ref{prop:idcomponent} to determine the identity component of the Sato-Tate group of $C_{2p}$.
  
\begin{prop}\label{prop:idcomponent2p} If $p$ is an odd prime and $C_{2p}: y^2=x^{2p}-1$, then  
$$\ST^0(\Jac(C_{2p}))\simeq (\U(1)_2)^{g/2}$$
where $g=p-1$ is the genus of $C_{2p}$.
\end{prop}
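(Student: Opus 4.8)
The plan is to exploit the isogeny decomposition from Proposition \ref{prop:jacobianpowers}, namely $\Jac(C_{2p})\sim\Jac(C_p)^2$ over $\overline{\mathbb Q}$, together with the fact (Corollary \ref{cor:AST=TL2p}) that the algebraic Sato-Tate conjecture holds for $\Jac(C_{2p})$ with $G^{1,Zar,0}_\ell(\Jac(C_{2p}))=\Hg(\Jac(C_{2p}))\otimes_{\mathbb Q}\mathbb Q_\ell=\LL(\Jac(C_{2p}))\otimes_{\mathbb Q}\mathbb Q_\ell$. Since $\ST^0$ is a maximal compact subgroup of $\AST^0(\Jac(C_{2p}))\otimes_{\mathbb Q}\mathbb C$, it suffices to compute the Lefschetz group (equivalently the connected component of the algebraic Sato-Tate group) and then identify its maximal compact subgroup.

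First I would recall that $\Jac(C_p)$ is simple of dimension $g/2=(p-1)/2$ with CM by $\mathbb Q(\zeta_p)$, and that by Proposition \ref{prop:idcomponent} its own identity component is $\U(1)^{g/2}$. Since $\Jac(C_{2p})$ is isogenous to $\Jac(C_p)\times\Jac(C_p)$, a homology basis for $\Jac(C_{2p})$ can be taken as two copies of a symplectic basis for $H_1(\Jac(C_p)_{\mathbb C},\mathbb C)$; I would work in the $2g$-dimensional symplectic space arranged so that the $i$-th pair of coordinates in the first copy sits adjacent to the $i$-th pair in the second copy. The endomorphism algebra $\End(\Jac(C_{2p})_{\overline{\mathbb Q}})\otimes\mathbb Q$ is $M_2(\mathbb Q(\zeta_p))$, and one computes the Lefschetz group (the symplectic matrices commuting with this algebra) to be the set of block-diagonal matrices $\diag(Y_1,\ldots,Y_{g/2})$ where each $Y_i$ is a $4\times4$ block commuting with the appropriate copy of $\mathbb Q(\zeta_p)$ acting diagonally, cut down by the symplectic condition — this is a torus of rank $g/2$ inside $\GL_2\times\cdots\times\GL_2$. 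Passing to a maximal compact subgroup and identifying the torus with the one generated by $\diag(u,\overline u,u,\overline u)$ blocks gives precisely $(\U(1)_2)^{g/2}$ in the notation of the paper's definition of $\U(1)_n$. Alternatively, and perhaps more cleanly, one can argue directly: $\ST^0(\Jac(C_{2p}))$ is the image of $\ST^0(\Jac(C_p))=\U(1)^{g/2}$ under the diagonal embedding $\USp(g)\hookrightarrow\USp(g)\times\USp(g)\subseteq\USp(2g)$ induced by the isogeny, and after conjugating to interleave the two copies block-by-block, the diagonal copy of $\U(1)^{g/2}$ becomes exactly $(\U(1)_2)^{g/2}$.

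The key steps in order are: (1) invoke Corollary \ref{cor:AST=TL2p} to reduce to computing $\LL(\Jac(C_{2p}))\otimes\mathbb Q_\ell$ and its maximal compact subgroup; (2) fix the homology/symplectic basis of $H_1(\Jac(C_{2p})_{\mathbb C},\mathbb C)$ compatible with the isogeny $\Jac(C_{2p})\sim\Jac(C_p)^2$, choosing the interleaved ordering of coordinate pairs; (3) write down the diagonal action of $\zeta_p$ (equivalently of the order-$2p$ automorphism of $C_{2p}$) in this basis and compute the centralizer in $\Sp_{2g}$, obtaining a rank-$g/2$ torus; (4) take the maximal compact subgroup and recognize it, after the interleaving conjugation, as $(\U(1)_2)^{g/2}$; (5) note the result is consistent with $\dim\ST^0(\Jac(C_{2p}))=g/2=\dim\ST^0(\Jac(C_p))$, as forced by the isogeny.

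The main obstacle I anticipate is bookkeeping rather than conceptual: choosing the right symplectic basis and ordering so that the centralizer computation produces the block pattern $\diag(u,\overline u,u,\overline u)$ rather than some conjugate, and verifying carefully that the symplectic pairing on $\Jac(C_{2p})$ restricts correctly to the two copies of $\Jac(C_p)$ so that the diagonal torus one gets is genuinely $\U(1)_2$ blocks and not, say, $\U(1)\times\U(1)$ blocks. One must also be slightly careful that the isogeny $\Jac(C_{2p})\sim\Jac(C_p)^2$, which a priori is only over $\overline{\mathbb Q}$, is enough: but since the identity component $\ST^0$ only sees the geometric (absolute) endomorphism structure, working over $\overline{\mathbb Q}$ is exactly what is needed, so this causes no real difficulty.
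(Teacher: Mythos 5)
Your proposal is correct and follows essentially the same route as the paper: the paper's proof likewise uses $\Jac(C_{2p})\sim\Jac(C_p)^2$ together with Proposition \ref{prop:idcomponent} and concludes that $\ST^0(\Jac(C_{2p}))$ is $\ST^0(\Jac(C_p))=\U(1)^{g/2}$ embedded diagonally into $\USp(2g)$, i.e.\ $(\U(1)_2)^{g/2}$ — exactly your ``alternative, cleaner'' argument. Your more explicit version via the centralizer of $M_2(\mathbb Q(\zeta_p))$ in $\Sp_{2g}$ is a sound (and somewhat more careful) elaboration of the same idea, correctly noting that one must centralize the full matrix algebra, not just the diagonal copy of $\mathbb Q(\zeta_p)$, to cut the blocks down to scalars.
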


\begin{proof}
Recall from Proposition \ref{prop:jacobianpowers} that 
$$\Jac(C_{2p})\sim (\Jac(C_{p}))^2.$$
The curve $C_p$ has genus $g'=(p-1)/2=g/2$, and Proposition \ref{prop:idcomponent} gives  the identity component for the Sato-Tate group of its Jacobian. It follows that the identity component of $\ST^0(\Jac(C_{2p}))$ is $\ST^0(\Jac(C_p))$ embedded into $\USp(2g)$, yielding
$\ST^0(\Jac(C_{2p})) \simeq (\U(1)^{g/2})_2\simeq (\U(1)_2)^{g/2}.$
\end{proof}

The main result of the following theorem is determining the component group of the Sato-Tate group of $\Jac(C_{2p})$. This is an interesting addition to the literature  as the Sato-Tate groups of these curves do not have cyclic component groups.

\begin{theorem}\label{thm:ST2p}
Let $p$ be an odd prime,  $g=p-1$, $S=\{1, \ldots, g\}$, and $a$ be a generator of the cyclic group $(\mathbb Z/2p\mathbb Z)^*$. Up to conjugation in $\USp(2g)$, the Sato-Tate group of $C_{2p}: y^2=x^{2p}-1$ is
$$\ST(\Jac(C_{2p}))= \left\langle (\U(1)_2)^{g/2}, \gamma, \gamma' \right\rangle,$$
where the $2\times2$ block entries of $\gamma$ 
are given by
$$\gamma_{i,j} = \begin{cases}
I & \text{if } j=\langle ai\rangle_{2p} \text{ and } \langle ai\rangle_{2p} \in S,\\
J & \text{if } i<\lfloor\frac{p}{2}\rfloor,\, j=p-\langle ai\rangle_{2p}, \text{ and } \langle ai\rangle_{2p} \not\in S,\\
-J & \text{if } i>\lfloor\frac{p}{2}\rfloor,\, j=p-\langle ai\rangle_{2p}, \text{ and } \langle ai\rangle_{2p} \not\in S,\\
0 & \text{otherwise,}
\end{cases}$$
for $1\leq i,j\leq g$, and $\gamma'=\diag(I,-I,\ldots,I,-I).$ Furthermore, there is an isomorphism $$\ST(\Jac(C_{2p}))\simeq (\U(1)_2)^{g/2}\rtimes \Gal(\mathbb Q(\zeta_{4p})/\mathbb Q).$$
\end{theorem}

See Table \ref{table:gammas} in Appendix \ref{app:component} for explicit examples of the matrix $\gamma$.

\begin{proof}

The reduced automorphism group of $C_{2p}$ is isomorphic to the dihedral group $D_{2p}$ (see, for example, \cite{Muller2017}). We consider the following generators of the automorphism group of  $C_{2p}$. Let  
$$\alpha(x,y)=({\zeta_{2p}} x,y)\; \text{ and }\; \beta(x,y)=(x^{-1}, {i}yx^{-p}),$$
where ${\zeta_{2p}}$ is a primitive $2p^{th}$ root of unity.   Thus, $\End(\Jac(C_{2p})_K)\simeq\End(\Jac(C_{2p})_{\overline{\mathbb Q}})$, where $K=\mathbb Q({\zeta_{2p},i})=\mathbb Q({\zeta_{4p}})$. 

We compute pullbacks of the differentials $\omega_j=x^jdx/y$, where $0\leq j<g=p-1$, in order to determine the generators of the endomorphism ring $\End(\Jac(C_{2p})_K)$. As in the proof of Theorem \ref{theorem:STp}, the pullback $\alpha^*$ leads to the endomorphism $\alpha=\diag(X_1,X_2,\ldots,X_g)$. Computing the pullback $\beta^*$ on the differential $\omega_j$ yields
$$\beta^*\omega_j =\frac{x^{-j}d(x^{-1})}{iyx^{-p}} = i\omega_{p-2-j}.$$
Thus, the endomorphism $\beta \in \End(\Jac(C_{2p})_K)$ is the  antidiagonal matrix $\beta=\antidiag(\underbrace{Z,Z,\ldots,Z}_{g})$, where $Z=\diag\left(i,-{i}\right).$\\

We choose two elements $\sigma_a, \sigma_b$ that generate the Galois group $\Gal(K/\mathbb Q)\simeq ( \mathbb Z/4p\mathbb Z)^\times$ and satisfy
\begin{center}
\begin{multicols}{2}
$\sigma_a\colon\begin{cases}
    \zeta_{2p}\mapsto  \zeta_{2p}^a\\
    i \mapsto i
\end{cases}$

$\sigma_b\colon\begin{cases}
    \zeta_{2p}\mapsto  \zeta_{2p}\\
    i \mapsto -i,
\end{cases}$
\end{multicols}
\end{center}
where $a$ is a generator of $(\mathbb Z/4p\mathbb Z)^\times$.

Let $\gamma$ and $\gamma'$ be defined as in the statement of the theorem. One can verify that $${}^{\sigma_a}\alpha = \gamma\alpha\gamma^{-1},\;\;{}^{\sigma_a}\beta = \gamma\beta\gamma^{-1},\;\; {}^{\sigma_b}\alpha = \gamma'\alpha\gamma'^{-1}, \text{ and }{}^{\sigma_b}\beta = \gamma'\beta\gamma'^{-1}$$
using a similar strategy to the one used in the proof Theorem \ref{theorem:STp}, so we omit the proof here. In this case, the matrix $\gamma$ contains both $J$ and $-J$ as entries so that it conjugates $\beta$ properly. 

Lastly, one can show as in the proof of Theorem \ref{theorem:STp} that the component group of $\ST(\Jac(C_{2p}))$ is $\langle \gamma, \gamma'\rangle$.

\end{proof}

\begin{corollary}
Up to conjugation in $\USp(2g,\mathbb C)$, the Sato-Tate group of $C_{2p}$ over $\mathbb Q(i)$ is
$$\ST(\Jac(C_{2p})_{\mathbb Q(i)})= \left\langle (\U(1)_2)^{g/2}, \gamma \right\rangle.$$
\end{corollary}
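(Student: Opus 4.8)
The plan is to deduce the statement from Theorem~\ref{thm:ST2p} by a base-change argument, using the general principle that if $F/\mathbb{Q}$ is a finite subextension of $K/\mathbb{Q}$, then $\ST(\Jac(C)_F)$ is the subgroup of $\ST(\Jac(C))$ whose component group corresponds, under the isomorphism of the component group with $\Gal(K/\mathbb{Q})$, to the subgroup $\Gal(K/F)$. Concretely, the identity component is unchanged by base change, and the components are indexed by $\Gal(K/F)$ rather than all of $\Gal(K/\mathbb{Q})$. This is exactly the content of the discussion following Corollary~\ref{cor:STgroupofcomponents}, combined with the functoriality built into Corollary~\ref{cor:STgroupofcomponents} itself (cf.\ \cite[Prop.\ 2.17]{Fite2012}).

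First I would recall from the proof of Theorem~\ref{thm:ST2p} that $K = \mathbb{Q}(\zeta_{2p}, i)$ and that the component group of $\ST(\Jac(C_{2p}))$ is identified with $\Gal(K/\mathbb{Q})$, with $\gamma$ corresponding to the generator $\sigma_a$ of $\Gal(\mathbb{Q}(\zeta_{2p})/\mathbb{Q})$ (extended to fix $i$) and $\gamma'$ corresponding to complex conjugation $\sigma_-$. Next I would take $F = \mathbb{Q}(i)$ and identify $\Gal(K/F) = \Gal(\mathbb{Q}(\zeta_{2p},i)/\mathbb{Q}(i)) \cong \Gal(\mathbb{Q}(\zeta_{2p})/\mathbb{Q}) \cong (\mathbb{Z}/2p\mathbb{Z})^*$, which is precisely the cyclic subgroup generated by $\sigma_a$ (note that $\sigma_-$ does not fix $i$, so it is excluded). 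Under the correspondence above, the component group of $\ST(\Jac(C_{2p})_{\mathbb{Q}(i)})$ is therefore generated by the image of $\gamma$, while the identity component $(\U(1)_2)^{g/2}$ is unchanged. This yields
$$\ST(\Jac(C_{2p})_{\mathbb{Q}(i)}) = \left\langle (\U(1)_2)^{g/2}, \gamma \right\rangle$$
up to conjugation in $\USp(2g, \mathbb{C})$.

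The main obstacle — really the only point requiring care — is justifying that base change to $F$ genuinely cuts the component group down to $\Gal(K/F)$ and does not alter the identity component. For the identity component this follows because $\ST^0$ depends only on $\Jac(C)_{\overline{\mathbb{Q}}}$ (equivalently, on the Hodge/Mumford–Tate group, which is insensitive to the base field), and the algebraic Sato-Tate group over $F$ has the same identity component as over $\mathbb{Q}$; this is already implicit in Corollary~\ref{cor:AST=TL2p} and Proposition~\ref{prop:idcomponent2p}. For the component group one uses that $G_\ell^{1,\mathrm{Zar}}(\Jac(C)_F)$ is the preimage in $G_\ell^{1,\mathrm{Zar}}(\Jac(C))$ of $\Gal(K/F) \subseteq \Gal(K/\mathbb{Q})$ under the natural surjection onto the component group, together with the fact that $\mathbb{Q}(i) \subseteq K$ and that $\Gal(K/\mathbb{Q}(i))$ is exactly the index-$2$ subgroup $\langle \sigma_a \rangle$ that fixes $i$. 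I would present this as a short paragraph invoking \cite[Prop.\ 2.17]{Fite2012} and Theorem~\ref{thm:ST2p}, then simply read off the generators, keeping the calculation itself to a line or two.
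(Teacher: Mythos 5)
Your proposal is correct and follows essentially the same route as the paper: the paper's one-line proof likewise rests on the observation that over $\mathbb{Q}(i)$ the relevant endomorphism field has cyclic Galois group $\Gal(\mathbb{Q}(\zeta_{2p},i)/\mathbb{Q}(i))\cong(\mathbb{Z}/2p\mathbb{Z})^*$, so that the component group is cut down to the subgroup generated by the class of $\gamma$ while the identity component is unchanged. Your write-up merely makes explicit the base-change functoriality (via \cite[Prop.\ 2.17]{Fite2012}) that the paper leaves implicit, which is a reasonable expansion rather than a different argument.
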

\begin{proof}
This follows from the fact that the minimal extension $L/\mathbb Q(i)$ over which all the endomorphisms of $\Jac(C)_{\mathbb Q(i)}$ are defined is $L=\mathbb Q({\zeta_{4p}})=\mathbb Q({\zeta_{2p,i}})$. 
\end{proof}

\section{Equidistribution Results}\label{sec:equidist} 

In this section we prove Theorem \ref{thm:introST}, which states that the generalized Sato-Tate conjecture holds for the Jacobians of $C_p$ and $C_{2p}$.  We first specify to the curve $C_p$. We begin by discussing  the $L$-functions associated to the  curve  and then state the generalized Sato-Tate conjecture. We then prove the generalized Sato-Tate conjecture following the strategy of Serre \cite{Ser98}. Finally, we prove the generalized Sato-Tate conjecture for the Jacobian of $C_{2p}$ using a result of \cite{Joh17}.

\subsection{Hecke Characters and $L$-Functions} We follow the exposition in \cite[Section 2.2]{FGL2016}, specifying to the curve $C_p$.
For a more thorough review of Hecke characters, we refer the reader to \cite{Lang90} and \cite{Weil52}. Let $\mathfrak{p}$ be a prime ideal to $p$ in $\mathbb{Q}(\zeta_p)$ and let $x$ be an element in the ring of integers of $\mathbb{Q}(\zeta_p)$.  Then there is precisely one $p^{th}$ root of unity $\chi_{\mathfrak{p}}(x)$ satisfying the condition
$$\chi_{\mathfrak{p}}(x)\equiv x^{(N(\mathfrak{p})-1)/p} \mod \mathfrak{p}.$$
We extend this to all of $\mathbb Q(\zeta_p)$ by setting $\chi_{\mathfrak{p}}(x)=0$ whenever $x\equiv 0 \pmod{\mathfrak p}$, and, thus, $\chi_{\mathfrak{p}}$ is a multiplicative character of order $p$ on $\mathbb F_{\mathfrak{p}}:=\mathcal O_{\mathbb Q(\zeta_p)}/\mathfrak p$.

We now define the Jacobi sums that appear in the $L$-functions of our curves. For all $h=(h_1,h_2) \in \mathbb{Z}/p\mathbb{Z} \times \mathbb{Z}/p\mathbb{Z}$, and for any ideal $\mathfrak p$ in $\mathbb Q(\zeta_p)$ not dividing $p$, we define  $$J_h(\mathfrak{p}):=-\sum\limits_{x\in \mathbb{F}_\mathfrak{p}} \chi_{\mathfrak{p}}(x)^{h_1}\chi_{\mathfrak{p}}(1-x)^{h_2}$$
(see \cite[Section 1.4]{Lang90}) and $J_h(\mathfrak{p})$ can be viewed as a function on $\mathbb{Z}/p\mathbb{Z} \times \mathbb{Z}/p\mathbb{Z}$ in terms of the characters on $\mathbb{Z}/p\mathbb{Z} \times \mathbb{Z}/p\mathbb{Z}$ (see \cite{Weil52}).  For each $h$ we extend the definition of $J_h(\mathfrak{p})$ to all ideals prime to $p$ in $\mathbb{Q}(\zeta_p)$ by multiplicativity.

\begin{lemma}
 Let $h$ be of the form $((p-2)a,a)$ for $a \in G=(\mathbb{Z}/p\mathbb{Z})^{\times}$ then 
 $$L((C_p)_{\mathbb{Q}(\zeta_p)},s)=L(J_h,s)^{p-1} \quad\text{ and }\quad L(C_p,s)=L(J_h,s).$$
\end{lemma}
\begin{proof}
This follows from the remark after Proposition 4.1.  One can also see this by computing the set $M_{p-2}$ as defined in \cite[(2.2)]{FGL2016}:
$$M_{p-2}=\{j \in G:\langle j \rangle_p<\langle (p-1)j \rangle_p\}=\{j\in G:\langle j \rangle_p < \langle -j \rangle_p\}=\{1,2,\dots,(p-1)/2\} $$ which gives the CM type for the curve $C_p$.  Using the Hecke characters $J_h$ for these $h$, \cite[Lemma 2.10] {FGL2016} gives the desired result. \end{proof}

\subsection{Generalized Sato-Tate Conjecture}  We specify the generalized Sato-Tate conjecture to the Jacobian of the curve $C_p$. Before we state the  conjecture, we need to set up some notation. 

Let $E/\mathbb Q$ be a subextension of $\mathbb Q(\zeta_p)/\mathbb Q$. Denote the set of conjugacy classes of $\ST(\Jac(C_p)_E)$ by $X_E$.  Let $P$ be an infinite subset of primes of a number field, and $\{\mathfrak{p}_i\}_{i\geq 1}$ be an ordering by norm of $P$.  Define a map $A_E:P \to X_E$ by sending $\mathfrak{p}$ to $x_{\mathfrak{p}}$. For any representation $\rho:\ST(\Jac(C_p)_E)\to GL_n(\mathbb{C})$ of $\ST(\Jac(C_p)_E)$, write
    $$L_{A_E}(\rho,s)=\displaystyle\prod_{\mathfrak{p}\in P} \det(1-\rho(x_{\mathfrak{p}})N(\mathfrak{p})^{-s})^{-1}.$$ 

We specify a theorem of Serre to the curve $C_p$ (see also \cite[Theorem 3.12]{FGL2016}). 
\begin{theorem}\cite[page I-23]{Ser98}\label{thm:serre} Suppose that for every irreducible nontrivial representation $\rho$ of $\ST(\Jac(C_p)_E)$ the Euler product $L_A(\rho,s)$ converges for Re$(s)>1$ and extends to a holomorphic and nonvanishing function for Re$(s)\geq1$. Then the sequence $\{x_{\mathfrak{p}_i}\}_{i\geq 1}$ is equidistributed over $X_E$ with respect to the projection on $X_E$ of the Haar measure of $\ST(\Jac(C_p)_E)$.
\end{theorem}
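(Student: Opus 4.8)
The asserted statement is a theorem of Serre, quoted here in the form we shall use; the plan is to recall the structure of his proof, which combines the Peter--Weyl theorem with a Tauberian estimate, so that in the body of the paper one may invoke \cite[page I-23]{Ser98} directly and apply it once the hypothesis on $L_A(\rho,s)$ has been verified for $\Jac(C_p)$.

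First I would reduce the equidistribution to Weyl's criterion. Since $\ST(\Jac(C_p)_E)$ is compact and $X_E$ is its space of conjugacy classes, the irreducible characters $\chi_\rho$ span a dense subspace of the continuous class functions on $X_E$, by Peter--Weyl together with the Stone--Weierstrass theorem. Consequently $\{x_{\mathfrak{p}_i}\}_{i\ge 1}$ is equidistributed with respect to the projection of Haar measure on $X_E$ if and only if, for every nontrivial irreducible $\rho$,
$$\frac{1}{N}\sum_{i=1}^{N}\chi_\rho(x_{\mathfrak{p}_i}) \longrightarrow 0,$$
the trivial representation contributing the constant term automatically and the Haar integral of $\chi_\rho$ vanishing by orthogonality when $\rho$ is nontrivial.

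Next I would connect these averages to the analytic behavior of $L_A(\rho,s)$ near $s=1$. Because $\rho$ takes values in a compact group, each $\rho(x_{\mathfrak{p}})$ is unitary, so its eigenvalues lie on the unit circle and the Euler product converges absolutely on $\mathrm{Re}(s)>1$, where
$$\log L_A(\rho,s) = \sum_{\mathfrak{p}\in P}\chi_\rho(x_{\mathfrak{p}})\,N(\mathfrak{p})^{-s} + h(s),$$
with $h$ holomorphic on $\mathrm{Re}(s)>1/2$, collecting the contributions $\tfrac{1}{k}\tr\rho(x_{\mathfrak{p}})^{k}N(\mathfrak{p})^{-ks}$ for $k\ge 2$. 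The hypothesis that $L_A(\rho,s)$ is holomorphic and nonvanishing on $\mathrm{Re}(s)\ge 1$ forces $\log L_A(\rho,s)$, hence also the single series $\sum_{\mathfrak{p}}\chi_\rho(x_{\mathfrak{p}})N(\mathfrak{p})^{-s}$, to extend holomorphically to the closed half-plane $\mathrm{Re}(s)\ge 1$; in particular it has no pole at $s=1$, in contrast to $\sum_{\mathfrak{p}}N(\mathfrak{p})^{-s}$, which inherits a simple pole there from the Dedekind zeta function.

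Finally I would feed the series $\sum_{\mathfrak{p}}\chi_\rho(x_{\mathfrak{p}})N(\mathfrak{p})^{-s}$, whose coefficients are bounded in absolute value, into a Wiener--Ikehara type Tauberian theorem: the absence of a pole at $s=1$ then yields $\sum_{N(\mathfrak{p})\le x}\chi_\rho(x_{\mathfrak{p}}) = o(x/\log x)$, and since the primes are ordered by norm and $\#\{\mathfrak{p}\in P : N(\mathfrak{p})\le x\}\sim x/\log x$ by the prime ideal theorem, this is exactly $\sum_{i=1}^{N}\chi_\rho(x_{\mathfrak{p}_i}) = o(N)$. Together with the first step, this establishes the equidistribution. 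I expect the Tauberian step to be the main obstacle: one has to check the hypotheses of Wiener--Ikehara (boundedness of coefficients, holomorphy up to and on the line $\mathrm{Re}(s)=1$ after removing the appropriate multiple of the pole) and reconcile the ordering by norm with the natural counting function, which is precisely the technical content handled in Serre's account.
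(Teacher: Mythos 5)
This statement is quoted verbatim from Serre and the paper offers no proof of it, so there is nothing internal to compare against; your reconstruction via Peter--Weyl and Weyl's criterion, the logarithm of the Euler product with the $k\geq 2$ terms absorbed into a function holomorphic on $\mathrm{Re}(s)>1/2$, and a Wiener--Ikehara-type Tauberian argument giving $\sum_{N(\mathfrak{p})\leq x}\chi_\rho(x_{\mathfrak{p}})=o(x/\log x)$ is precisely the argument in the cited source. The sketch is correct, and you rightly flag the Tauberian step as the one place where the holomorphy and nonvanishing of $L_A(\rho,s)$ on the closed half-plane $\mathrm{Re}(s)\geq 1$ are actually consumed.
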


For a prime $q$ of $E$, let $x_q$ be the conjugacy class of $\ST(\Jac(C_p)_E)$ using the isomorphism $\ST(\Jac(C_p)_E)\simeq \ST(\Jac(C_p)_{\mathbb{Q}(\zeta_p)}) \rtimes \Gal(\mathbb{Q}(\zeta_p)/E)$.
Specifically, set
    $$x_q:=\left(\diag\left(\dfrac{J_{r_1}(\mathfrak{p})}{N(\mathfrak{p})^{1/2}},\dfrac{J_{r_1}(\mathfrak{\overline p})}{N(\mathfrak{p})^{1/2}},\dots, \dfrac{J_{r_{(p-1)/2}}(\mathfrak{p})}{N(\mathfrak{p})^{1/2}}, \dfrac{J_{r_{(p-1)/2}}(\mathfrak{\overline p})}{N(\mathfrak{p})^{1/2}}\right), \Frob_q\right) \in X_{\mathbb{Q}},$$
where each $r_i=((p-2)i, i)$. The set $\{r_1,r_2,\dots, r_{(p-1)/2}\}$ is a complete set of representatives of $M$, and $\mathfrak{p}$ is a prime of $\mathbb{Q}(\zeta_{p})$ lying over $q$. 

Now specify $P$ to be the set of primes of good reduction for $(C_p)_E$ and let $\{p_i\}_{i \geq 1}$ be an ordering by norm of $P$. We can now state the generalized Sato-Tate conjecture for $\Jac(C_p)$  (see, for example, \cite[page I-23]{Ser98}).     

\begin{conjec}[Generalized Sato-Tate]\label{conjec:genST}
The sequence $x_E:=\{x_{p_i}\}_{i \geq 1}$ is equidistributed on $X_E$ with respect to the image on $X_E$ of the Haar measure of $\ST(\Jac(C_p)_E).$
\end{conjec}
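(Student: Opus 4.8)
The plan is to verify the hypotheses of Theorem~\ref{thm:serre}. Fix a subextension $E/\mathbb Q$ of $\mathbb Q(\zeta_p)/\mathbb Q$ and write $G_E:=\ST(\Jac(C_p)_E)$. By Theorem~\ref{theorem:STp}, together with the fact that $E$ corresponds to a subgroup of the cyclic group $\Gal(\mathbb Q(\zeta_p)/\mathbb Q)$, we have $G_E\simeq \U(1)^g\rtimes \Gal(\mathbb Q(\zeta_p)/E)$ with cyclic component group. It therefore suffices to show that for every irreducible nontrivial representation $\rho$ of $G_E$, the Euler product $L_{A_E}(\rho,s)$ converges for $\mathrm{Re}(s)>1$ and extends to a holomorphic, nonvanishing function on $\mathrm{Re}(s)\geq 1$; the equidistribution of $x_E$ then follows at once from Theorem~\ref{thm:serre}.

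First I would classify the irreducible representations of $G_E$ by Clifford theory for a semidirect product with abelian component group: each irreducible $\rho$ is induced from $\chi\otimes\psi$, where $\chi$ is a character of $\U(1)^g$ --- that is, a tuple $(n_1,\dots,n_g)\in\mathbb Z^g$ sending $\diag(u_1,\overline{u_1},\dots,u_g,\overline{u_g})$ to $\prod_i u_i^{n_i}$ --- and $\psi$ is a character of the stabilizer of $\chi$ in $\Gal(\mathbb Q(\zeta_p)/E)$. If $\chi$ is trivial, then $\rho$ factors through $\Gal(\mathbb Q(\zeta_p)/E)$ as an (automatically nontrivial) character of a cyclic group, so $L_{A_E}(\rho,s)$ is the $L$-function of a nontrivial finite-order Hecke character and the required properties are classical.

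The substantive case is $\chi$ nontrivial. Evaluating $\rho$ along the element $x_q$ from the statement of Conjecture~\ref{conjec:genST}, the diagonal part contributes a product of powers of the normalized Jacobi sums $J_{g_i}(\mathfrak p)/N(\mathfrak p)^{1/2}$, which by Weil's theorem and the identities in~\eqref{equation:Jacidentities} is the value at $\mathfrak p$ of a unitarized Hecke character $\psi_\chi$ of $\mathbb Q(\zeta_p)$ (a product of powers of the Gr\"ossencharacters $J_{g_i}$, divided by the appropriate power of the norm). Because $\rho$ is induced, and by the Artin formalism already used in the Lemma above (via \cite[Chapter VII, Proposition 10.4]{Neu}), $L_{A_E}(\rho,s)$ factors as a finite product over $\Gal$-conjugates of $L$-functions $L(\psi_\chi^\sigma,s)$, possibly twisted by the finite-order characters coming from $\psi$. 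Convergence for $\mathrm{Re}(s)>1$ is immediate since $\rho$ is unitary --- the diagonal entries of $x_q$ have absolute value $1$ because $J_g(\mathfrak a)J_g(\overline{\mathfrak a})=N(\mathfrak a)$ --- and each factor is holomorphic and nonvanishing on $\mathrm{Re}(s)\geq 1$ by Hecke's theorem, provided $\psi_\chi$ is a nontrivial Hecke character.

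The main obstacle is exactly this last nontriviality: one must show that for $\chi$ nontrivial no product $\prod_i (J_{g_i})^{n_i}$ equals a power of the norm character, so that $\psi_\chi$ and each of its conjugates is a nontrivial Hecke character to which Hecke's theorem applies. This is where nondegeneracy enters essentially: the nondegeneracy of the CM type of $\Jac(C_p)$ (Proposition~\ref{prop:shiodadegenerate}, Corollary~\ref{cor:stabdegenerate}) --- equivalently, the maximality of the Hodge group used in Section~\ref{sec:algSTgroup} --- is precisely the statement that the infinity types $g_iM^{-1}$ of these Jacobi-sum Gr\"ossencharacters satisfy no nontrivial linear relation modulo the norm, which rules out the degenerate coincidence above. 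The remaining work is careful bookkeeping: tracking a nontrivial $\rho$ through restriction to $\ST^0(\Jac(C_p))$ and the induction from a stabilizer, and checking that the finite-order twists appearing do not accidentally trivialize the product --- both routine once the nondegeneracy input is in place and the component group is cyclic.
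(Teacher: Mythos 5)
Your proposal follows essentially the same route as the paper's proof (Theorem~\ref{theorem:STconjec}): verify Serre's criterion (Theorem~\ref{thm:serre}) by classifying the irreducible representations of $\U(1)^g\rtimes(\text{cyclic component group})$ as inductions of $\chi\otimes\phi_{\underline{b}}$, use cyclicity of $\Gal(\mathbb Q(\zeta_p)/E)$ to absorb the finite-order twist, and reduce to Hecke's nonvanishing theorem for the Gr\"ossencharacters built from the Jacobi sums $J_{g_i}$, with nondegeneracy of the CM type (equivalently, $\ST^0=\U(1)^g$ being the full torus) guaranteeing that these characters are nontrivial. The only real difference is that where you argue directly that no nontrivial product $\prod_i J_{g_i}^{n_i}$ can be a power of the norm, the paper outsources this identification and the resulting nonvanishing to Fit\'e's work, but the underlying input is identical.
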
 

The following theorem specifies this conjecture to $E=\mathbb Q(\zeta_p)$.

\begin{theorem}\label{thm:equidistonQzetap}
The generalized Sato-Tate conjecture holds for $\Jac(C_p)$ over $\mathbb{Q}(\zeta_p)$.
\end{theorem}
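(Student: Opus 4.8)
The plan is to reduce the equidistribution statement over $\mathbb Q(\zeta_p)$ to the nonvanishing of Hecke $L$-functions via Serre's criterion (Theorem \ref{thm:serre} with $E=\mathbb Q(\zeta_p)$). Over $\mathbb Q(\zeta_p)$ the Galois group $\Gal(\mathbb Q(\zeta_p)/E)$ is trivial, so the Sato-Tate group is its identity component $\ST^0(\Jac(C_p))\simeq \U(1)^g$ (Proposition \ref{prop:idcomponent}), an abelian compact group. Hence every irreducible representation is $1$-dimensional, and by the description of $\U(1)^g$ as $\diag(u_1,\overline{u_1},\dots,u_g,\overline{u_g})$, the nontrivial irreducible characters are indexed by tuples $\mathbf n=(n_1,\dots,n_g)\in\mathbb Z^g\setminus\{0\}$, acting by $\prod_i u_i^{n_i}$. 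First I would write down, for each such $\mathbf n$, the Euler product $L_{A_E}(\rho_{\mathbf n},s)=\prod_{\mathfrak p}\bigl(1-\rho_{\mathbf n}(x_{\mathfrak p})N(\mathfrak p)^{-s}\bigr)^{-1}$ over primes $\mathfrak p$ of good reduction, where $u_i=J_{g_i}(\mathfrak p)/N(\mathfrak p)^{1/2}$.

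The key computation is to identify this Euler product with a Hecke $L$-function attached to an algebraic Hecke character of $\mathbb Q(\zeta_p)$. Using $J_{g_i}(\mathfrak p)\overline{J_{g_i}(\mathfrak p)}=N(\mathfrak p)$ (established in the excerpt), we have $\overline{u_i}=u_i^{-1}\cdot(\text{unit factor})$; more precisely $\overline{J_{g_i}(\mathfrak p)}=N(\mathfrak p)/J_{g_i}(\mathfrak p)$, so $\rho_{\mathbf n}(x_{\mathfrak p})=\prod_i \bigl(J_{g_i}(\mathfrak p)/N(\mathfrak p)^{1/2}\bigr)^{n_i}$ after absorbing conjugates into a shift of the exponents (replacing the pair $(n_i \text{ on } u_i,\ \text{on } \overline{u_i})$ by a single integer). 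Thus $\rho_{\mathbf n}(x_{\mathfrak p})=\psi_{\mathbf n}(\mathfrak p)/N(\mathfrak p)^{s_0}$ for a suitable real shift, where $\psi_{\mathbf n}(\mathfrak p):=\prod_i J_{g_i}(\mathfrak p)^{n_i}$ extended multiplicatively. Since each $J_{g_i}$ is a Hecke character on $\mathbb Q(\zeta_p)$ (Weil's theorem, quoted above) with explicitly described infinity type and modulus $p^2\mathcal O$, the finite product $\psi_{\mathbf n}$ is again a Hecke character; the normalization by $N(\mathfrak p)^{1/2}$ makes it unitary. The crucial point to check is that $\psi_{\mathbf n}$ is \emph{nontrivial} (not a finite-order twist giving a trivial contribution) whenever $\mathbf n\neq 0$: this follows because the infinity types $g_i M^{-1}$ of the distinct $J_{g_i}$ are $\mathbb Z$-linearly independent as elements of $\mathbb Z^{[\mathbb Q(\zeta_p):\mathbb Q]}$ modulo the relation coming from $J_g\overline{J_g}=N$, so a nontrivial integer combination has nontrivial infinity type and hence is a nontrivial Hecke character. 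Therefore $L_{A_E}(\rho_{\mathbf n},s)=L(\psi_{\mathbf n},s+s_0)$ is the $L$-function of a nontrivial unitary Hecke character, which by Hecke's theorem is holomorphic and nonvanishing for $\mathrm{Re}(s)\geq 1$. Feeding this into Theorem \ref{thm:serre} gives the equidistribution, i.e.\ Conjecture \ref{conjec:genST} for $E=\mathbb Q(\zeta_p)$.

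I expect the main obstacle to be the bookkeeping in the second step: correctly matching the diagonal entries $J_{g_i}(\mathfrak p)/N(\mathfrak p)^{1/2}$ and their complex conjugates against the characters of $\U(1)^g$, and then verifying that the resulting exponent vector produces a Hecke character that is genuinely nontrivial. In particular one must be careful that the $x_q$ defined in the excerpt uses specific representatives $g_i=(kg_i,g_i)$ of $M$, and that the decomposition $V_\ell(C_p)=\bigoplus_{g\in G^\times}V_g$ with Frobenius acting by $J_g$ is used consistently; the nontriviality of $\psi_{\mathbf n}$ ultimately rests on the nondegeneracy of the CM-type $\{\mathbb Q(\zeta_p),\sigma_1,\dots,\sigma_{(p-1)/2}\}$ recorded after Proposition \ref{prop:shiodadegenerate}, which guarantees the infinity types are independent. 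Once the Hecke character is correctly identified, the analytic input (Hecke's nonvanishing theorem for $\mathrm{Re}(s)\geq 1$) and the application of Serre's criterion are immediate.
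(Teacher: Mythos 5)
Your argument is correct, but it is genuinely different from what the paper does: the paper disposes of this theorem in one line by citing \cite[Theorem 3.6]{Fite15}, whereas you give a self-contained proof. Your route is the standard one and is sound: over $E=\mathbb Q(\zeta_p)$ the component group is trivial, so $\ST(\Jac(C_p)_E)=\U(1)^g$ is abelian, every nontrivial irreducible representation is a character $\phi_{\underline b}$ with $\underline b\in\mathbb Z^g\setminus\{0\}$, and $\phi_{\underline b}(x_{\mathfrak p})=\prod_i\bigl(J_{g_i}(\mathfrak p)/N(\mathfrak p)^{1/2}\bigr)^{b_i}$ is the value at $\mathfrak p$ of the unitarization of the Hecke character $\prod_i J_{g_i}^{b_i}$. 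The one point that genuinely needs justification is nontriviality of this unitarized character, i.e.\ that $\sum_i b_i\,\mathbf 1_{g_iM^{-1}}$ is not a constant multiple of the all-ones vector for $\underline b\neq 0$; you correctly identify that this is exactly the nondegeneracy of the CM type (Proposition \ref{prop:shiodadegenerate}), which forces the half-system of translates of $M^{-1}$ together with the norm to be $\mathbb Z$-linearly independent. With that, Hecke's nonvanishing theorem and Theorem \ref{thm:serre} finish the proof. Two small housekeeping items you should make explicit: the Euler product $L_{A_E}$ runs only over good primes, so it agrees with the Hecke $L$-function only up to finitely many Euler factors, each of which is holomorphic and nonvanishing for $\mathrm{Re}(s)>0$, so this is harmless; and the convergence for $\mathrm{Re}(s)>1$ follows from $|J_{g_i}(\mathfrak p)|=N(\mathfrak p)^{1/2}$. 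What your approach buys is transparency (the reader sees exactly which analytic input and which arithmetic input — nondegeneracy of the CM type — are used); what the paper's citation buys is brevity and the ability to reuse Fit\'e's statement verbatim, and indeed your argument is essentially a proof of that cited result specialized to $C_p$.
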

\begin{proof}
See \cite[Theorem 3.6]{Fite15}.
\end{proof}

To prove Conjecture \ref{conjec:genST} for $\Jac(C_p)$ over $\mathbb Q$, we will prove the convergence condition of Theorem \ref{thm:serre}. We first describe the irreducible representations of $\ST(\Jac(C_p))$ as in \cite{Ser77}. Let $\mathcal G=\ST(\Jac(C_p))$ so that $\mathcal G^0=\ST^0(\Jac(C_p))$. We associate to any tuple $\underline{b}=(b_1,b_2,\dots, b_{(p-1)/2}) \in \mathbb{Z}^{(p-1)/2}$ the irreducible representation  $\phi_{\underline{b}}:\U(1)^{(p-1)/2} \rightarrow \mathbb{C}^{\times}$ defined by 
$$\phi_{\underline{b}}(u_1,\ldots,u_{(p-1)/2})=\displaystyle\prod_{i=1}^{(p-1)/2}u_i^{b_i},$$
where $U=\diag(u_1,\overline u_1,\ldots,u_{(p-1)/2},\overline u_{(p-1)/2})\in \U(1)^{(p-1)/2}.$ 

Let $H_{\underline{b}} \subseteq \Gal(\mathbb{Q}(\zeta_p)/\mathbb{Q})$ be the subgroup such that
\begin{equation}\label{eqn:phibHaction}
    \phi_{\underline{b}}(u_1,\ldots,u_{(p-1)/2})=\phi_{\underline{b}}(^h(u_1,\ldots,u_{(p-1)/2}))
\end{equation}
for every $ h \in H_{\underline{b}}$.  Let $\mathcal H:=\mathcal G^0 \rtimes H_{\underline{b}}.$
Then we can extend $\phi_{\underline{b}}$ to $\mathcal H$ via the map 
$$\phi_{\underline{b}}:\mathcal H \to \mathbb{C}^{\times},\quad \phi_{\underline{b}}(u_1,\ldots,u_{(p-1)/2},h)=\displaystyle\prod_{i=1}^{(p-1)/2}u_i^{b_i}.$$  

By work of Serre \cite{Ser77}, every irreducible representation of $\mathcal G$ is of the form $\Theta:=\Ind_{\mathcal H}^{\mathcal G}(\chi \otimes \phi_{\underline{b}})$, where $\chi$ is a character of $H_{\underline{b}}$ viewed as a character of $\mathcal H$ using composition with the projection $\mathcal H \to H_{\underline{b}}$.

\begin{theorem}\label{theorem:STconjec}
The generalized Sato-Tate conjecture holds for $\Jac(C_p)$ over $\mathbb{Q}$.
\end{theorem}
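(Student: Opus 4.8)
The plan is to verify the hypothesis of Serre's criterion (Theorem~\ref{thm:serre}) with $E=\mathbb{Q}$, namely that for every irreducible nontrivial representation $\Theta = \Ind_{\mathcal H}^{\mathcal G}(\chi \otimes \phi_{\underline b})$ of $\mathcal G = \ST(\Jac(C_p))$, the Euler product $L_A(\Theta,s)$ converges for $\operatorname{Re}(s)>1$ and extends holomorphically and without zeros to $\operatorname{Re}(s)\geq 1$. First I would use the inductive property of $L$-functions: since $\Theta$ is induced from $\mathcal H = \mathcal G^0 \rtimes H_{\underline b}$, and $H_{\underline b}$ corresponds under $\Gal(\mathbb{Q}(\zeta_p)/\mathbb{Q}) \simeq (\mathbb{Z}/p\mathbb{Z})^\times$ to $\Gal(\mathbb{Q}(\zeta_p)/E_{\underline b})$ for the fixed field $E_{\underline b}$, we have $L_A(\Theta,s) = L_{A_{E_{\underline b}}}(\chi\otimes\phi_{\underline b}, s)$, reducing the problem to a one-dimensional representation over the intermediate field $E_{\underline b}$.

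Next I would identify $L_{A_{E_{\underline b}}}(\chi\otimes\phi_{\underline b},s)$ with a Hecke $L$-function. The character $\phi_{\underline b}$ evaluated at the conjugacy class $x_q$ produces, up to the normalizing powers of $N(\mathfrak p)^{1/2}$, a product of the form $\prod_i (J_{g_i}(\mathfrak p)/N(\mathfrak p)^{1/2})^{b_i}$, and by the results recalled in Equation~\eqref{equation:Jacidentities} and the relation $J_g(\mathfrak a)J_g(\overline{\mathfrak a}) = N(\mathfrak a)$, such a product is the value at $\mathfrak p$ of a Hecke character $\psi$ on $\mathbb{Q}(\zeta_p)$ (a product of powers of the $J_g$ and their conjugates, suitably unitarized); twisting by $\chi$ stays within the class of Hecke characters, now on $E_{\underline b}$. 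The nontriviality of $\Theta$ forces this Hecke character to be nontrivial: this is the crucial point where one checks that if $\psi$ were trivial then $\underline b = 0$ and $\chi$ trivial, contradicting $\Theta \neq \mathds{1}$ --- here one uses that precisely one of $m, -m$ lies in $gM^{-1}$ so the infinity type is genuinely nonzero unless all $b_i$ vanish. Then Hecke's theorem (the $L$-function of a nontrivial unitary Hecke character is holomorphic and nonvanishing for $\operatorname{Re}(s)\geq 1$), together with absolute convergence for $\operatorname{Re}(s)>1$ from $|J_g(\mathfrak p)/N(\mathfrak p)^{1/2}| = 1$, gives exactly the analytic input Serre's criterion requires.

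Finally I would assemble these pieces: invoke Theorem~\ref{thm:serre} to conclude that $\{x_{p_i}\}$ is equidistributed on $X_{\mathbb{Q}}$ with respect to the image of the Haar measure, which is the assertion of Conjecture~\ref{conjec:genST} for $E = \mathbb{Q}$, i.e. the statement of the theorem. I expect the main obstacle to be the bookkeeping in the second step: writing down explicitly, as a genuine Hecke character with a well-defined infinity type and modulus, the combination $\prod_i (J_{g_i})^{b_i}$ that the representation $\phi_{\underline b}$ picks out on the Frobenius conjugacy classes $x_q$, and verifying that its triviality is equivalent to the triviality of $\Theta$. The description of $H_{\underline b}$ via \eqref{eqn:phibHaction} and the translation between $H_{\underline b}$-invariance and the infinity type of the associated Hecke character is where the argument must be handled carefully; once that dictionary is in place, the cyclicity of $\Gal(\mathbb{Q}(\zeta_p)/\mathbb{Q})$ (which guarantees every irreducible is monomial in the clean form above) and Hecke's nonvanishing theorem finish the proof.
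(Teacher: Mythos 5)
Your overall strategy is the same as the paper's --- verify Serre's criterion (Theorem~\ref{thm:serre}) by showing each $L_{A_\mathbb{Q}}(\Theta,s)$, for $\Theta=\Ind_{\mathcal H}^{\mathcal G}(\chi\otimes\phi_{\underline b})$ nontrivial, reduces to the Hecke $L$-function of a nontrivial unitarized Gr\"ossencharacter and then apply Hecke's nonvanishing theorem --- but the key reduction is executed differently. You invoke Artin formalism to write $L_{A_\mathbb{Q}}(\Theta,s)=L_{A_{E_{\underline b}}}(\chi\otimes\phi_{\underline b},s)$ and then must realize $\chi\otimes\phi_{\underline b}$ as an honest Hecke character \emph{of the intermediate field} $E_{\underline b}$; this requires descending the Jacobi-sum character $\prod_i J_{g_i}^{b_i}$ (which lives naturally on $\mathbb{Q}(\zeta_p)$) to $E_{\underline b}$, a step with a genuine obstruction in general (Galois-invariance of a Hecke character does not by itself give descent), though here it can be justified because the relevant one-dimensional piece is cut out of a tensor construction on the Tate module and is therefore an algebraic $\ell$-adic character of $\Gal(\overline{\mathbb{Q}}/E_{\underline b})$. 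The paper sidesteps this descent entirely: it stays over $\mathbb{Q}(\zeta_p)$, uses the relation $L_{A_\mathbb{Q}}(\Ind_{\mathcal G^0}^{\mathcal G}\phi_{\underline b},s)=L_{A_\mathbb{Q}}(\Theta,s)^n$ with $n=|H_{\underline b}|$ (valid by \eqref{eqn:phibHaction}, and in the twisted case by extending $\chi$ to a character $\widetilde\chi$ of the full cyclic Galois group and absorbing it via the projection formula), identifies the left side with $L(\Psi,s)$ for a Gr\"ossencharacter $\Psi$ on $\mathbb{Q}(\zeta_p)$, and extracts holomorphy and nonvanishing of $L_{A_\mathbb{Q}}(\Theta,s)$ from those of its $n$-th power. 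Your approach buys a cleaner conceptual statement (each irreducible corresponds directly to one Hecke $L$-function over $E_{\underline b}$) at the cost of the descent bookkeeping you correctly flag as the delicate point; the paper's $n$-th power trick buys a shorter argument that only ever uses Hecke characters of the cyclotomic field. Your attention to verifying nontriviality of the resulting Hecke character (so that Hecke's theorem, rather than the Dedekind zeta function with its pole, applies) is a point the paper leaves implicit and is worth making explicit in either version.
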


\begin{proof}
We wish to apply Theorem \ref{thm:serre}, so we need to show 
$$ L_{A_{\mathbb{Q}}}(\Theta,s)=\displaystyle\prod_{p_i} \det(1-\Theta(x_{p_i})p_i^{-s})^{-1}$$ is holomorphic and non-vanishing on $Re(s) \geq 1$.

Let  $n$ be the cardinality of $H_{\underline{b}}$. We first consider the case where $\chi$ is the trivial character.  The theory of $L$-functions gives 
\begin{align*}
    L_{A_{\mathbb{Q}}}(\phi_{\underline{b}},s)&=L_{A_\mathbb Q}(\Ind_{\mathcal H}^{\mathcal G} \Ind_{\mathcal G^0}^{\mathcal H}\phi_{\underline{b}},s)\\
    &=L_{A_\mathbb Q}(n\Ind_{\mathcal H}^{\mathcal G} \phi_{\underline{b}},s)\\&=L_{A_\mathbb Q}(\Theta,s)^n.
\end{align*}
Note that the second equality holds by Equation \eqref{eqn:phibHaction}. By \cite[Section 3.5]{Fite15}, we then have $L_{A_{\mathbb{Q}}}(\phi_{\underline{b}},s)=L(\Psi,s)$  up to a finite number of Euler factors, where $\Psi$ is a Gr\"ossencharacter and $L(\Psi,s)$ is holomorphic and nonvanishing on Re$(s) \geq 1$. 

We now consider the case where $\chi$ is non-trivial.  Since $\Gal(\mathbb{Q}(\zeta_p)/\mathbb{Q})$ is cyclic there exists a character $\widetilde{\chi}$ of $\Gal(\mathbb{Q}(\zeta_p)/\mathbb{Q})$ such that $\widetilde{\chi}$ restricted to ${{H_{\underline{b}}}}$ equals $\chi.$ Thus,
$$\Theta=\Ind_{\mathcal H}^{\mathcal G} (\chi \otimes \phi_{\underline{b}})=\widetilde{\chi}\otimes \Ind_{\mathcal H}^{\mathcal G}\phi_{\underline{b}}.$$ Furthermore, 
$\Gal(\mathbb{Q}(\zeta_p)/\mathbb{Q})$ being cyclic also gives us that $$n\Theta=\widetilde{\chi}\otimes\Ind_{\mathcal G^0}^{\mathcal G} \phi_{\underline{b}}=\Ind_{\mathcal G^0}^{\mathcal G} \phi_{\underline{b}}.$$ 
Hence, we again have that $L_{A_\mathbb Q}(\Theta,s)^n=L(\Psi,n)$ up to a finite number of Euler factors, where $\Psi$ is a Gr\"ossencharacter and $L(\Psi,s)$ is holomorphic and nonvanishing on Re$(s) \geq 1$.  
\end{proof}
\begin{remark*}
The result also follows from \cite[Prop. 16]{Joh17}.
\end{remark*}

\begin{theorem}\label{theorem:STconjecfor2p}
Let $E/\mathbb Q$ be any subextension of $\mathbb Q(\zeta_{4p})/\mathbb Q$. Then the generalized Sato-Tate conjecture holds for $\Jac(C_{2p})$ over $E$.
\end{theorem}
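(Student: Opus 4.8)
The plan is to deduce the statement from the fact, due to Johansson \cite{Joh17}, that the generalized Sato-Tate conjecture holds for abelian varieties of CM type over an arbitrary number field. Unlike the case of $C_p$, the Galois group $\Gal(K/\mathbb Q)$ with $K=\mathbb Q(\zeta_{2p},i)$ is not cyclic: since $\mathbb Q(\zeta_{2p})=\mathbb Q(\zeta_p)$ for odd $p$ and $\mathbb Q(\zeta_p)\cap\mathbb Q(i)=\mathbb Q$ (the two fields being ramified only at $p$ and at $2$ respectively), we have $\Gal(K/\mathbb Q)\simeq \mathbb Z/(p-1)\mathbb Z\times\mathbb Z/2\mathbb Z$, which is cyclic only for $p=2$. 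Hence the device used in the proof of Theorem \ref{theorem:STconjec} — extending a character of $H_{\underline b}$ to all of $\Gal(\mathbb Q(\zeta_p)/\mathbb Q)$ — is unavailable, and one must invoke Johansson's theorem directly.

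First I would record that $\Jac(C_{2p})$ is an abelian variety of CM type. By Proposition \ref{prop:jacobianpowers}, $\Jac(C_{2p})\sim\Jac(C_p)^2$ over $\overline{\mathbb Q}$, and $\Jac(C_p)$ is a simple abelian variety with CM by $\mathbb Q(\zeta_p)$, a CM field of degree $p-1=2\dim\Jac(C_p)$ (see the discussion after Proposition \ref{prop:shiodadegenerate}). A product of abelian varieties of CM type is again of CM type, so $\Jac(C_{2p})$ has CM; concretely $\End^0(\Jac(C_{2p})_{\overline{\mathbb Q}})\simeq M_2(\mathbb Q(\zeta_p))$ contains the commutative étale subalgebra $\mathbb Q(\zeta_p)\times\mathbb Q(\zeta_p)$ of dimension $2(p-1)=2\dim\Jac(C_{2p})$. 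Since having CM is a property of the geometric endomorphism algebra, $\Jac(C_{2p})_E$ is of CM type for every number field $E$, and by Theorem \ref{thm:ST2p} all of these endomorphisms are already defined over $K=\mathbb Q(\zeta_{2p},i)$, which is abelian over $\mathbb Q$ and hence over any subextension $E$.

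Next I would apply \cite{Joh17}: for $\Jac(C_{2p})_E$, the Euler products attached to the irreducible nontrivial representations of $\ST(\Jac(C_{2p})_E)$ are, up to finitely many Euler factors, products of Hecke $L$-functions of nontrivial Grössencharacters over the relevant CM field, hence holomorphic and nonvanishing on $\mathrm{Re}(s)\geq 1$ by Hecke; Serre's criterion (the analogue of Theorem \ref{thm:serre} over the base field $E$) then gives equidistribution of the conjugacy classes $\{x_{\mathfrak p_i}\}$ on $X_E$ with respect to the projection of the Haar measure of $\ST(\Jac(C_{2p})_E)$. Finally I would note that the group appearing in Johansson's equidistribution statement coincides with the one computed here: by Corollary \ref{cor:AST=TL2p} the algebraic Sato-Tate conjecture holds for $\Jac(C_{2p})$, so $\ST(\Jac(C_{2p})_E)$ is well-defined up to conjugacy in $\USp(2g)$ as a maximal compact subgroup of $\AST(\Jac(C_{2p}))\otimes_{\mathbb Q}\mathbb C$, with identity component $(\U(1)_2)^{g/2}$ (Proposition \ref{prop:idcomponent2p}) and component group $\Gal(K/E)$. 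This establishes Conjecture \ref{conjec:generalST} for $\Jac(C_{2p})$ over every subextension $E$ of $K/\mathbb Q$, in particular over $\mathbb Q$ and over $\mathbb Q(i)$.

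The main obstacle is not a computation but ensuring that \cite{Joh17} is applied in the correct generality — namely as the assertion that the generalized Sato-Tate conjecture holds for all CM abelian varieties over an arbitrary number field, with equidistribution measured against the explicit Sato-Tate group of Theorem \ref{thm:ST2p}. Were one to seek a self-contained argument instead, the genuine difficulty is exactly the non-cyclicity of $\Gal(K/\mathbb Q)$: Serre's strategy from Theorem \ref{theorem:STconjec} adapts verbatim only for those $E$ with $\Gal(K/E)$ cyclic (for instance $E=\mathbb Q(i)$, $E=\mathbb Q(\zeta_p)$, or $E=K$), while the remaining subextensions — including $E=\mathbb Q$ — require the finer analysis of CM types carried out in \cite{Joh17}.
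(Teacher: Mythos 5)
Your proof is correct and follows the same route as the paper: the paper's own proof simply notes that $\Jac(C_{2p})\sim\Jac(C_p)^2$ by Proposition \ref{prop:jacobianpowers} and then cites \cite[Prop.~16]{Joh17}. Your additional verifications (that $\Jac(C_{2p})$ is of CM type, that the Sato-Tate group is pinned down by Corollary \ref{cor:AST=TL2p}, and why the cyclicity argument from Theorem \ref{theorem:STconjec} is unavailable) are accurate but not part of the paper's argument.
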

\begin{proof} By Proposition \ref{prop:jacobianpowers},  $\Jac(C_{2p})\sim\Jac(C_p)^2$. The result then follows from \cite[Prop. 16]{Joh17}.
\end{proof}


\section{Moment Statistics}\label{sec:moments}

In this Section we compute moment statistics associated to the Sato-Tate groups. These moment statistics can be used to verify the equidistribution statement of the generalized Sato-Tate conjecture by comparing them to moment statistics obtained for the traces $a_i$ in the normalized $L$-polynomial $\overline L_p(C,T)$ in Equation \eqref{eqn:normLpoly}.

\subsection{Preliminaries}
The following background information has been adapted from \cite[Section 4]{LarioSomoza2018} and \cite[Section 4]{SutherlandNotes}. We start by recalling some basic properties of moment statistics. We define the $n$th moment (centered at 0) of a probability density function to be the expected value of the $n$th power of the values, i.e. $M_n[X]=E[X^n]$.

Recall  that for independent variables $X$ and $Y$ we have $E[X+Y]=E[X]+E[Y]$ and $E[XY]=E[X]E[Y]$ (see, for example, \cite{LarioSomoza2018}). Thus, we have the following
\begin{equation}\label{eqn:Mnproductbreakdown}
    M_n[XY]=M_n[X]M_n[Y],
\end{equation}
\begin{equation}\label{eqn:Mnpowersbreakdown}
    M_a[X]M_b[X]=M_{a+b}[X],
\end{equation}
and
\begin{equation}\label{eqn:Mnsumbreakdown}
    M_n[X_1+\cdots + X_m]=\sum_{a_1+\cdots+a_m=n} \binom{n}{a_1,\ldots, a_m}M_{a_1}[X_1]\cdots M_{a_m}[X_m].
\end{equation}
Furthermore, for any constant $b$, we have $M_n[b]=b^n$.

We will now work to define the Haar measure on the groups that we obtain for the identity component $\ST^0(\Jac(C))$. From Propositions \ref{prop:idcomponent} and \ref{prop:idcomponent2p} we see that the possible groups are 
$$\U(1)^g \text{ and } (\U(1)_2)^{g/2}.$$
For each of these groups, we are interested in the pushforward of the Haar measure onto the set of conjugacy classes conj($\U(1)^g$) or conj($(\U(1)_2)^{g/2}$). 

We start with the unitary group $\U(1)$ and consider the trace map $\tr$ on $U\in \U(1)$ defined by $z:=\tr(U)=u+\overline{u}=2\cos(\theta)$, where $u=e^{i\theta}$. This trace map takes values in $[-2,2]$. From here we see that $dz=2\sin(\theta)d\theta$ and 
$$\mu_{\U(1)}= \frac1\pi \frac{dz}{\sqrt{4-z^2}}=\frac1\pi d\theta$$
gives a uniform measure of $\U(1)$ on $\theta\in[-\pi,\pi]$ (see \cite[Section 2]{SutherlandNotes}). We can deduce the following pushforward measures
\begin{equation*}\label{eqn:muU1}
    \mu_{\U(1)^n}= \prod_{i=1}^n \frac1\pi \frac{dz_i}{\sqrt{4-z_i^2}}= \prod_{i=1}^n\frac{1}{\pi}d\theta_i\;\;
\text{ and }\;\;\mu_{(\U(1)_2)^{n}} = \prod_{i=1}^{n} \frac1\pi \frac{dz_i}{\sqrt{4-z_i^2}} = \prod_{i=1}^{n}\frac{1}{\pi}d\theta_i.
\end{equation*}

Note that though the measure $\mu_{(\U(1)_2)^{n}}$ is expressed the same as the measure $\mu_{\U(1)^{n}}$, we will get a different distribution since in the former case each eigenangle $\theta_i$ occurs with multiplicity 2 (see, for example, \cite[Section 4.3]{SutherlandNotes}).

We can now define the moment sequence $M[\mu]$, where $\mu$ is a positive measure on some interval $I=[-d,d]$. The $n^{th}$ moment $M_n[\mu]$  is, by definition, the expected value of $\phi_n$ with respect to $\mu$, where $\phi_n$ is the function $z\mapsto z^n$. It is therefore given by 
$$M_n[\mu] = \int_I z^n\mu(z).$$

For $\U(1)$ we have $M_n[\mu_{\U(1)}]  = \binom{n}{n/2}$, where $\binom{n}{n/2}=0$ if $n$ is odd. Hence,
$$M[\mu_{\U(1)}] = (1,0,2, 0,6,0,20,0,\ldots). $$
From here, we can compute $M_n[\mu_{\U(1)_2}] =2^n\binom{n}{n/2}$, and take binomial convolutions to obtain
\begin{align*}
M_n[\mu_{\U(1)\times \U(1)}]&=\sum_{r=0}^n\binom{n}{r}M_n[\mu_{\U(1)}]M_{n-r}[\mu_{\U(1)}].
\end{align*}
We can combine these strategies with Equations  \eqref{eqn:Mnproductbreakdown}, \eqref{eqn:Mnpowersbreakdown}, and \eqref{eqn:Mnsumbreakdown}  to compute moments for $\mu_{\U(1)^g}$ and $\mu_{(\U(1)_2)^{g/2}}$.

For each $i\in\{1,2, \ldots, g\}$, denote by $\mu_i$ the projection of the Haar measure onto the interval $\left[-\binom{2g}{i},\binom{2g}{i}\right].$ 
We can compute $M_n[\mu_i]$ by averaging over the components of the Sato-Tate group. For example, in the case where the curve has CM by $\mathbb Q(\zeta_d)$, we will denote the restriction of $\mu_i$ to the component $\ST^0(\Jac(C))\cdot\gamma^k$ by ${}^k\mu_i$ and 
 \begin{equation*}\label{eqn:momentsGaloisgrp}
     \mu_i=\frac1{d}\sum_{k=0}^{d} {}^k\mu_i \;\;\text{ and }\;\; M_n[\mu_i]= \frac1{d}\sum_{k=0}^{d} M_n[{}^k\mu_i].
 \end{equation*}

\subsection{Characteristic Polynomials}
In this subsection, we give results for the characteristic polynomials in each component of the Sato-Tate groups of $C_p$ and $C_{2p}$. 

\subsubsection{Characteristic Polynomials for $C_p$}\label{sec:charpolyp}
We start with a random matrix $U$ in the identity component $\ST^0(\Jac(C_p))$. We will denote  the characteristic polynomial of $U\gamma^i$ by   $P_{\gamma^i}(T)$. Since $\gamma^{p-1}\in \ST^0(C_{p})$, we only compute $P_{\gamma^i}(T)$ for $i=0,\ldots, p-2$.

\begin{example}\label{ex:charpoly11}
We compute the characteristic polynomials of the  curve $C_{11}\colon y^2=x^{11}-1$. This yields  $P_{\gamma^1}(T) = P_{\gamma^3}(T) = P_{\gamma^7}(T)=P_{\gamma^9}(T)=T^{10}+1$ and
\begin{align*}
P_{\gamma^0}(T)&=\prod_{i=1}^5(T-u_i)(T-\overline{u_i}),\\
P_{\gamma^2}(T)=P_{\gamma^6}(T)&=(T^{5}+{u_1}\overline{u_2}u_3u_4u_5)(T^5 +\overline{u_1}u_2\overline{u_3}\overline{u_4}\overline{u_5}),\\
P_{\gamma^4}(T)=P_{\gamma^8}(T)&=(T^{5}-{u_1}\overline{u_2}u_3u_4u_5)(T^5 -\overline{u_1}u_2\overline{u_3}\overline{u_4}\overline{u_5}),\\
P_{\gamma^5}(T)&=(T^2+1)^5.
\end{align*}
\end{example}

We have two general results for the characteristic polynomials associated to the Sato-Tate group of $C_p$, which we combine into the following proposition.

\begin{prop}\label{prop:pcharpolygamma0}
Let $C_p$ be the genus $g$ curve $y^2=x^{p}-1$, where $p=2g+1$ is prime. Then
$$P_{\gamma^0}(T)=\prod_{i=1}^g(T-u_i)(T-\overline{u_i})\;\;\;\;\text{ and }\;\;\;\;P_{\gamma^g}(T)=(T^2+1)^g.$$
\end{prop}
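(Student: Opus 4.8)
The plan is to compute the characteristic polynomial of a generic element $U$ of the identity component $\ST^0(\Jac(C_p)) \simeq \U(1)^g$ directly from its explicit matrix form. Recall from the proof of Proposition~\ref{prop:idcomponent} and the discussion of Section~\ref{sec:STgroups} that, under the symplectic basis chosen in the proof of Theorem~\ref{theorem:STp}, an element of $\U(1)^g$ is the $2g \times 2g$ block-diagonal matrix $U = \diag(u_1, \overline{u_1}, u_2, \overline{u_2}, \ldots, u_g, \overline{u_g})$ with each $u_i \in \mathbb{C}^\times$ of absolute value $1$. Here $\gamma^0 = \Id$, so $U\gamma^0 = U$ and $P_{\gamma^0}(T) = \det(T \cdot \Id - U)$.

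First I would observe that a block-diagonal matrix has characteristic polynomial equal to the product of the characteristic polynomials of its blocks. Each block is the $2\times 2$ diagonal matrix $\diag(u_i, \overline{u_i})$, whose characteristic polynomial is $(T - u_i)(T - \overline{u_i})$. Multiplying over $i = 1, \ldots, g$ gives
\[
P_{\gamma^0}(T) = \prod_{i=1}^g (T - u_i)(T - \overline{u_i}),
\]
which is exactly the claimed identity. This also matches the first line of Example~\ref{ex:charpoly11} for $g = 5$, providing a sanity check.

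Strictly speaking, there is nothing here beyond unwinding definitions: the only genuine content is recording which explicit form of $\U(1)^g$ inside $\USp(2g)$ is being used — namely the one from $\U(1)^n$ in the Notation section, with the eigenvalues arranged in conjugate pairs along the diagonal — and then invoking multiplicativity of the characteristic polynomial over block-diagonal summands. I expect the "main obstacle," such as it is, to be purely expository: making sure the reader sees that the $u_i$ appearing in the statement are precisely the parameters of the torus $\U(1)^g$ under the fixed symplectic basis, so that $P_{\gamma^0}(T)$ genuinely depends on those $g$ free parameters. No analytic input, no case analysis, and no appeal to the nondegeneracy results is needed for this particular proposition.
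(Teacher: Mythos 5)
Your proof is correct and takes essentially the same approach as the paper, which simply cites Proposition \ref{prop:idcomponent} (that $\ST^0(\Jac(C_p))\simeq\U(1)^g$) and leaves the block-diagonal computation implicit. You have merely spelled out the routine unwinding of definitions that the paper's one-line proof omits.
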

\begin{proof}

The first equality is a consequence of Proposition \ref{prop:idcomponent} which tells us that $\ST^0(\Jac(C_p))=\U(1)^g$.

For a justification of the second equality, we recall from our work in the proof of Theorem \ref{theorem:STp}. There we proved that $\gamma^g$ is a diagonal block matrix with $\pm J$ on its diagonal entries. Multiplying $U$ by $\gamma^g$ yields a  diagonal block matrix, whose diagonal blocks are of the form
$$\begin{pmatrix}0&u_i\\-\overline{u_i}&0 \end{pmatrix}\; \text{ or }\; \begin{pmatrix}0&-u_i\\\overline{u_i}&0 \end{pmatrix},$$
depending on whether we multiplied by $J$ or $-J$. In either case, the factor of the characteristic polynomial associated to this block is of the form
$$T^2+u_1\overline{u_1}=T^2+1.$$
Thus, since there are $g$ diagonal blocks, the characteristic polynomial is 
$$P_{\gamma^g}(T)=(T^2+1)^g.$$
\end{proof}

\subsubsection{Characteristic Polynomials for $C_{2p}$}

We again start with a random matrix $U$ in the identity component of the Sato-Tate group. Recall that the Sato-Tate group  of $C_{2p}$ has two generators for the component group: $\gamma$ and $\gamma'$.  We will denote  the characteristic polynomial of $U\gamma^i(\gamma')^j$ by   $P_{i,j}(T)$. Since $\gamma^{p-1},(\gamma')^2\in \ST^0(C_{2p})$, we only compute $P_{i,j}(T)$ for $i=0,\ldots, p-2$ and $j=0,1$. 

\begin{example}\label{ex:charpoly10}
We compute the characteristic polynomials of the  curve $C_{10}\colon y^2=x^{10}-1$. This yields $P_{1,0}(T)=P_{3,0}(T)=P_{1,1}(T)=P_{3,1}(T)=(T^4+1)^2$ and
\begin{align*}
P_{0,0}(T)&=\prod_{i=1}^2(T-u_i)^2(T-\overline{u_i})^2,\\
P_{2,0}(T)&=(T^2+1)^4,\\
P_{0,1}(T)&=T^8-2(u_1\overline u_2+\overline u_1u_2)T^6+(4+(u_1\overline u_2)^2+(\overline u_1u_2)^2)T^4\\
&\hspace{.2in}-2(u_1\overline u_2+\overline u_1u_2)T^2+1,\\
P_{2,1}(T)&=T^8+2(u_1\overline u_2+\overline u_1u_2)T^6+(4+(u_1\overline u_2)^2+(\overline u_1u_2)^2)T^4\\
&\hspace{.2in}+2(u_1\overline u_2+\overline u_1u_2)T^2+1.
\end{align*}

\end{example}

We have two general results for the characteristic polynomials associated to the Sato-Tate group of $C_{2p}$, which we combine into the following proposition.

\begin{prop}\label{prop:2pcharpolygamma0}
Let $C_{2p}$ be the genus $g$ curve $y^2=x^{2p}-1$, where $p$ is prime. Then
$$P_{0,0}(T)=\prod_{i=1}^{g/2}(T-u_i)^2(T-\overline{u_i})^2\;\;\;\;\text{ and }\;\;\;\;P_{g/2,0}(T)=(T^2+1)^g.$$
\end{prop}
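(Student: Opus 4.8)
The plan is to mirror the argument of Proposition \ref{prop:pcharpolygamma0}, using the explicit description of the identity component of the Sato-Tate group of $C_{2p}$ rather than that of $C_p$. By Proposition \ref{prop:idcomponent2p}, we have $\ST^0(\Jac(C_{2p})) \simeq (\U(1)_2)^{g/2}$, where $g = p-1$. So a random matrix $U$ in the identity component is conjugate in $\USp(2g)$ to a diagonal matrix of the form
$$U = \diag(u_1, \overline{u_1}, u_1, \overline{u_1}, u_2, \overline{u_2}, u_2, \overline{u_2}, \ldots, u_{g/2}, \overline{u_{g/2}}, u_{g/2}, \overline{u_{g/2}}),$$
where each $u_i \in \mathbb C^\times$ with $|u_i| = 1$; that is, each eigenvalue pair $u_i, \overline{u_i}$ appears with multiplicity $2$, which is precisely the defining feature of $\U(1)_2$ as opposed to $\U(1)$ (as noted in the excerpt after the definition of $\mu_{(\U(1)_2)^n}$).

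First I would recall that the characteristic polynomial is a conjugation-invariant function, so it suffices to compute $\det(T \cdot \Id - U)$ for $U$ in the diagonal form above. Since $U$ is diagonal, its characteristic polynomial is simply the product of the linear factors $(T - \lambda)$ over the diagonal entries $\lambda$. Grouping these entries by the index $i$, each block of four entries $u_i, \overline{u_i}, u_i, \overline{u_i}$ contributes the factor $(T - u_i)^2 (T - \overline{u_i})^2$. Taking the product over $i = 1, \ldots, g/2$ gives
$$P_{0,0}(T) = \prod_{i=1}^{g/2} (T - u_i)^2 (T - \overline{u_i})^2,$$
which is exactly the claimed formula.

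There is essentially no obstacle here: the statement is an immediate consequence of Proposition \ref{prop:idcomponent2p} together with the elementary fact that the characteristic polynomial of a diagonal matrix factors into linear terms indexed by the diagonal entries. The only point worth stating carefully is the multiplicity-$2$ structure of the eigenvalues in $(\U(1)_2)^{g/2}$, which distinguishes $P_{0,0}(T)$ from the corresponding polynomial $P_{\gamma^0}(T)$ for $C_p$ in Proposition \ref{prop:pcharpolygamma0}; this traces back to the isogeny $\Jac(C_{2p}) \sim \Jac(C_p)^2$ of Proposition \ref{prop:jacobianpowers}, which forces each eigenangle of $C_p$ to occur twice for $C_{2p}$. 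I would therefore present the proof in one or two sentences, citing Proposition \ref{prop:idcomponent2p} for the shape of $\ST^0(\Jac(C_{2p}))$ and then reading off the characteristic polynomial directly.
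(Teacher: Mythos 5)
Your proof is correct and follows exactly the paper's argument: the paper's own proof is a one-line citation of Proposition \ref{prop:idcomponent2p}, and your expansion simply makes explicit the multiplicity-two eigenvalue structure of $(\U(1)_2)^{g/2}$ that the paper leaves implicit. Nothing is missing.
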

\begin{proof}
The first equality is a consequence of Proposition \ref{prop:idcomponent2p} which tells us that $\ST^0(\Jac(C_p))=(\U(1)_2)^{g/2}$. For a justification of the second equality, see the proof of Proposition \ref{prop:pcharpolygamma0}.
\end{proof}

We also have the following conjecture.

\begin{conjec}\label{conjec:2pcharpolygammarelprime}

Let $C_{2p}$ be the genus $g$ curve $y^2=x^{2p}-1$, where $p$ is prime. Then
$P_{d,j}(T)=(T^{g} +1)^2,$ for any $d$ relatively prime to $2g$ and $j=0$ or 1. 
\end{conjec}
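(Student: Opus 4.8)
The plan is to mimic the (conjectural) reasoning behind Conjecture \ref{conjec:pcharpolygammarelprime} for $C_p$, using the isogeny $\Jac(C_{2p})\sim\Jac(C_p)^2$ from Proposition \ref{prop:jacobianpowers} to reduce the $C_{2p}$ statement to the $C_p$ statement. The first step is to understand precisely how the component group $\langle\gamma,\gamma'\rangle\simeq\Gal(\mathbb Q(\zeta_{2p},i)/\mathbb Q)$ sits inside $\USp(2g)$ relative to the two copies of $\Jac(C_p)$. Since $\gamma'=\diag(iJ,\ldots,iJ)$ acts within each pair of blocks and commutes with the identity component $(\U(1)_2)^{g/2}$ up to conjugating eigenvalues, multiplication by $\gamma^d(\gamma')^j$ for $j=0,1$ changes the characteristic polynomial of $U\gamma^d$ only by the substitutions $u_i\mapsto\overline{u_i}$ induced by $\sigma_-$; because the eigenvalues of $U$ already come in conjugate pairs with multiplicity two, I expect $P_{d,1}(T)=P_{d,0}(T)$ for all $d$, which would reduce the claim to the case $j=0$.

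Next I would compute $P_{d,0}(T)$ for $d$ coprime to $2g$. Here the key observation is that the permutation of blocks induced by conjugation by $\gamma$ is governed by multiplication by the generator $a$ of $(\mathbb Z/2p\mathbb Z)^*$, exactly as in Theorem \ref{thm:ST2p}, and this is the "same" permutation (on $g/2$ conjugate-pairs of blocks, each appearing with multiplicity two) as the one appearing in Theorem \ref{theorem:STp} for $C_p$. Thus $U\gamma^d$ should be block-conjugate to a direct sum of two copies of the matrix $U'\gamma_{C_p}^d$ arising for $C_p$ of genus $g/2$, whence
$$P_{d,0}(T)=\bigl(P_{\gamma^d}^{(C_p)}(T)\bigr)^2,$$
and if Conjecture \ref{conjec:pcharpolygammarelprime} holds with $d$ coprime to $2\cdot(g/2)=g$... here one must be careful, since "coprime to $2g$" is a stronger hypothesis than "coprime to $g$", so the cited conjecture for $C_p$ of genus $g/2$ does apply, giving $P_{\gamma^d}^{(C_p)}(T)=T^{g}+1$ and hence $P_{d,0}(T)=(T^g+1)^2$. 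The remaining task is to carry out, by direct linear algebra on the block-permutation matrix $U\gamma^d$ (a single $2g$-cycle up to sign on the blocks when $d$ is coprime to the relevant order), the computation that its characteristic polynomial is $T^g+1$ on each copy: a product around the cycle of the off-diagonal entries, each a $2\times 2$ block of the form $\pm\diag(u_i,\overline{u_i})$ or $\pm J\diag(u_i,\overline{u_i})$, telescopes so that $|u_i|=1$ forces all eigenvalues to be $g$-th roots of $-1$.

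The main obstacle will be bookkeeping the signs and the interaction with $\gamma'$: the matrices $\gamma$ and $\gamma'$ do not commute (their commutator lands in the identity component, reflecting the non-cyclic, non-abelian structure of $\Gal(\mathbb Q(\zeta_{2p},i)/\mathbb Q)$ as a group extension), so one cannot naively diagonalize the action, and one must check that the substitution $u_i\mapsto\overline{u_i}$ from $\gamma'$ genuinely leaves $P_{d,0}$ invariant rather than producing a sign or a permutation of factors. I would handle this by writing $U\gamma^d(\gamma')^j$ as a monomial matrix, reading off its "cycle type" on blocks together with the product of entries around each cycle, and invoking the standard fact that the characteristic polynomial of such a monomial matrix depends only on the cycle lengths and the products of entries around cycles; since each such product has absolute value $1$, the conclusion $P_{d,j}(T)=(T^g+1)^2$ follows. (As with Conjecture \ref{conjec:pcharpolygammarelprime}, proving this for all primes $p$ uniformly — rather than verifying it in examples — is exactly the delicate point, because one must rule out that some product around a cycle equals $-1$ in a way that shifts which root of unity appears; this is why the authors state it as a conjecture.)
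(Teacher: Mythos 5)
First, a point of orientation: the paper does not prove this statement. It is stated as a conjecture ``based on observations,'' supported only by Example \ref{ex:charpoly10}, so there is no argument of the authors' to compare yours against; what follows is an assessment of your sketch on its own terms.

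Your opening reduction contains a concrete error. You assert that $P_{d,1}(T)=P_{d,0}(T)$ ``for all $d$'' because the eigenvalues of $U$ come in conjugate pairs with multiplicity two. The paper's own Example \ref{ex:charpoly10} refutes this: there $P_{0,1}\neq P_{0,0}$ and $P_{2,1}\neq P_{2,0}$, since the $j=1$ polynomials acquire cross terms in $u_1\overline{u_2}$ that are absent when $j=0$. So multiplying by $\gamma'$ does change the characteristic polynomial in general; the equality $P_{d,1}=P_{d,0}$ for $d$ coprime to $2g$ (which does hold in the example) must come from how $\gamma'$ interacts with the specific cycle structure of $U\gamma^d$ for those $d$, not from a blanket symmetry of the identity component. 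You would need to treat $U\gamma^d(\gamma')^1$ as a block-monomial matrix in its own right and compute its cycle products, rather than reducing to $j=0$. Separately, even granting the plausible and example-consistent identity $P_{d,0}(T)=\bigl(P^{(C_p)}_{\gamma^d}(T)\bigr)^2$, your argument is conditional on Conjecture \ref{conjec:pcharpolygammarelprime}, which is itself unproven in the paper, so as written the proposal establishes nothing unconditionally.

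That said, your closing ``product around the cycle'' idea is the right mechanism, and the obstruction you flag there is not actually an obstruction. For $C_p$ of genus $g'$ with $\gcd(d,2g')=1$: the element $a^d$ generates $(\mathbb Z/p\mathbb Z)^*$ and hence acts as a single $g'$-cycle on the conjugate-pairs of exponents, so $U\gamma^d$ is a single block cycle; since $d$ is odd, $(\sigma_a^d)^{g'}=\sigma_{-1}$, so the product of the $\pm I,\pm J$ factors around that cycle is $\pm J$, and the full cycle product has the anti-diagonal form $\varepsilon J\diag(v,\overline v)$ with $|v|=1$. Then
\begin{equation*}
\det\bigl(T^{g'}I-\varepsilon J\diag(v,\overline v)\bigr)=T^{2g'}-(\varepsilon\overline v)(-\varepsilon v)=T^{2g'}+1,
\end{equation*}
independently of both the sign $\varepsilon$ and the value $v$ --- the anti-diagonal shape kills exactly the sign ambiguity you worry about at the end. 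The genuinely missing work is therefore not the signs but (i) the bookkeeping proving the single-cycle claim uniformly in $p$, and (ii) above all the $j=1$ case, for which your proposed reduction is unsound.
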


\subsection{General Results for the Moments}

Based on the results of Section \ref{sec:charpolyp}, we have the following general result for the moment statistics associated to the Sato-Tate group of $C_p$.
\begin{prop} 
For the curve $C_p$ we have
$$M_n[{}^g\mu_i]=\begin{cases}
\binom{g}{i/2}^n & \text{ if } i \text{ is even}\\
0 & \text{ otherwise}.
\end{cases}$$
\end{prop}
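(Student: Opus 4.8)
The plan is to exploit the fact, established in Proposition~\ref{prop:pcharpolygammag}, that on the component $\ST^0(\Jac(C_p))\cdot\gamma^g$ the characteristic polynomial is $P_{\gamma^g}(T)=(T^2+1)^g$, \emph{independently} of the choice of $U\in\ST^0(\Jac(C_p))$. In other words, the coefficients $a_1,\dots,a_g$ of the normalized $L$-polynomial are all constant on this component, so each ${}^g\mu_i$ is a point mass and its moments can be read off directly.

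First I would expand $(T^2+1)^g=\sum_{k=0}^{g}\binom{g}{k}T^{2k}$ by the binomial theorem and compare with the shape of $\overline L_p(C,T)$ in Equation~\eqref{eqn:normLpoly}, in which $a_i$ is the coefficient of $T^{2g-i}$. Setting $2g-i=2k$ forces $i=2(g-k)$: hence if $i$ is even the corresponding coefficient is $\binom{g}{g-i/2}=\binom{g}{i/2}$, while if $i$ is odd there is no monomial of degree $2g-i$ in $(T^2+1)^g$ and the coefficient is $0$. Note that all coefficients of $(T^2+1)^g$ are nonnegative, so no sign conventions in the $L$-polynomial enter the computation.

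Consequently, restricted to the component $\ST^0(\Jac(C_p))\cdot\gamma^g$, the value of $a_i$ is the constant $\binom{g}{i/2}$ when $i$ is even and $0$ when $i$ is odd; that is, ${}^g\mu_i$ is the Dirac measure supported at that constant. Applying the identity $M_n[b]=b^n$ for a constant $b$ (recorded just after Equation~\eqref{eqn:Mnsumbreakdown}) then yields $M_n[{}^g\mu_i]=\binom{g}{i/2}^n$ for $i$ even and $M_n[{}^g\mu_i]=0$ for $i$ odd, which is exactly the claim.

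This argument is entirely routine once Proposition~\ref{prop:pcharpolygammag} is in hand; the only point requiring a little care is the degree bookkeeping that matches the index $i$ of $a_i$ with the exponent $2k$ in the binomial expansion. No genuine obstacle arises.
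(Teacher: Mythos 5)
Your proposal is correct and follows the same route as the paper: expand $(T^2+1)^g$ from Proposition~\ref{prop:pcharpolygammag}, observe that the coefficients $a_i$ are constant on the component $\ST^0(\Jac(C_p))\cdot\gamma^g$ (equal to $\binom{g}{i/2}$ for $i$ even and $0$ for $i$ odd), and apply $M_n[b]=b^n$. The only difference is that you spell out the degree bookkeeping slightly more carefully than the paper does.
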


\begin{proof}
Recall from Proposition  \ref{prop:pcharpolygamma0}   that $P_{\gamma^g}(T)=(T^2+1)^g.$ Expanding this yields
$$P_{\gamma^g}(T)=\sum_{i=0}^g \binom{g}{j}T^{2j}.$$
Thus, $a_i = \binom{g}{i/2}$ when $i$ is even and it equals 0 when $i$ is odd. It is then clear that $\mu_i(\phi_n)$ in this case is $\binom{g}{i/2}^n$ when $i$ is even and 0 when $i$ is odd 
\end{proof}

We also have the following conjecture for  characteristic polynomials and moments.

\begin{conjec}\label{conjec:pmomentrelprime}
Let $C_p$ be the genus $g$ curve $y^2=x^{p}-1$, where $p=2g+1$ is prime. Then
$P_{\gamma^d}(T)=T^{2g} +1,$ for any $d$ relatively prime to $2g$ and
$M_n[{}^k\mu_i]=0$.
\end{conjec}

\subsection{Explicit Examples of Moment Statistics}\label{sec:x11moments}
We first determine moment statistics for the genus 5 curve $C_{11}:\; y^2=x^{11}-1$. Using characteristic polynomials $P_{\gamma^k}(T)$ that were computed for each component in Example \ref{ex:charpoly11} and the properties in Equations \eqref{eqn:Mnproductbreakdown}, \eqref{eqn:Mnpowersbreakdown}, and \eqref{eqn:Mnsumbreakdown}, we can compute the $n$th moments for each $\mu_i$, $1\leq i\leq5$.  These moments, given in Table \ref{table:momentsp}, are easily computed using Sage \cite{Sage}.  See Table \ref{table:momemntsp} in Section \ref{sec:a1mu1Moments} for a comparison of $M[\mu_1]$ to the numerical  moments $M[a_1]$ of the normalized $L$-polynomial of the curve.
\begin{table}[h]
{\renewcommand{\arraystretch}{1.5}
\begin{tabular}{|c|l|}
\hline
$M[\mu_1]$& $(1, 0, 1, 0, 27, 0, 1090, \ldots)$\\
\hline
$M[\mu_2]$&$(1, 1, 9, 133, 2873, 75453, 2200605, \ldots)$\\
\hline
$M[\mu_3]$&$(1, 0, 24, 0, 1381080, 0, 161935061760, \ldots)$\\
\hline
$M[\mu_4]$&$(1, 2, 64, 4688, 498236, 61887736, 8430343600,  \ldots)$\\
\hline
$M[\mu_5]$&$(1, 0, 72, 0, 934332, 0, 22782049800, \ldots)$\\
\hline
\end{tabular}}
\caption{Moment Statistics for $y^2=x^{11}-1$.}\label{table:momentsp}
\end{table}

Using the same strategy as above, we determine that the $\mu_1$-moment statistics for the Sato-Tate group of $C_{10}\colon y^2=x^{10}-1$ are 
$$M[\mu_1]=(1, 0, 2, 0, 72, 0, 3200,0,156800,0,8128512 \ldots).$$
In Figure \ref{fig:x10histogram} we give a histogram of $a_1$-values of $y^2=x^{10}-1$, as well as  moment statistics (up to the 10th moment). Observe that the numerical moments $M[a_1]$, which are computed using primes up to $2^{28}$, are quite close to what we obtained for $M[\mu_1]$. See \cite{x10histogram} for an animated histogram of the $a_1$-distribution. The algorithm used to make the histogram is described in \cite{HarveySuth2014} and \cite{HarveySuth2016}. 
\begin{center}
\begin{figure}[h]
    \centering
    	\includegraphics[width=5.25in]{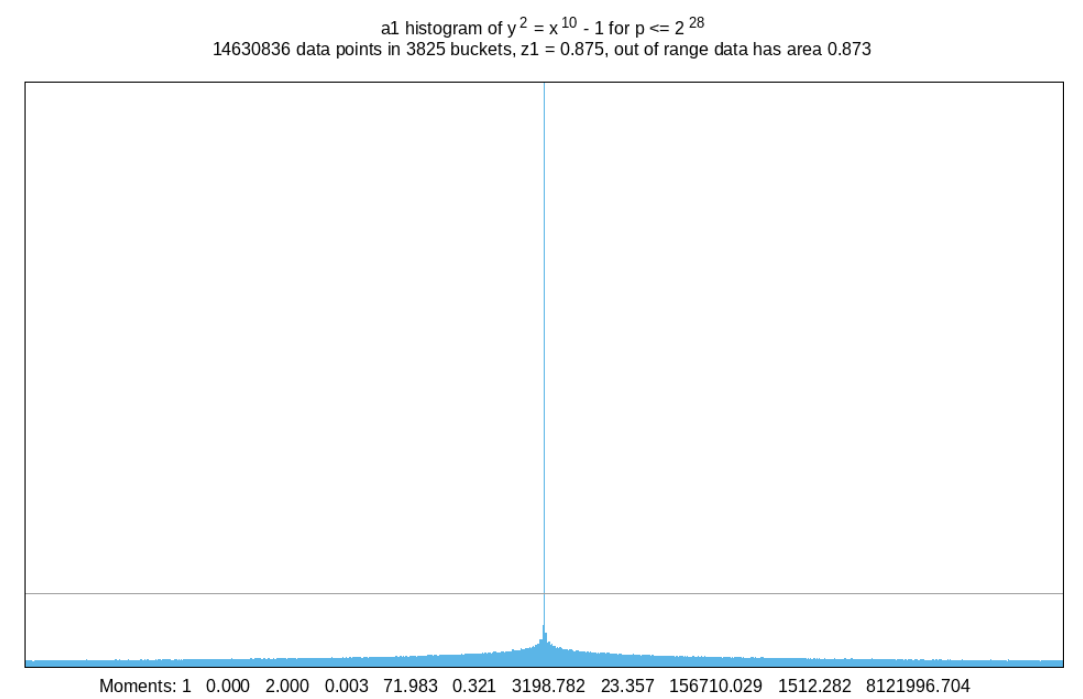}
    \caption{Histogram of $a_1$ values of $y^2=x^{10}-1$ for primes less than $2^{28}$. See \cite{x10histogram}.}
    \label{fig:x10histogram}
\end{figure}
\end{center}
See Table \ref{table:momemntsp} in Section \ref{sec:a1mu1Moments} for  moment statistics  for other curves.

\subsection{Tables of $\mu_1$- and $a_1$-Moment Statistics}\label{sec:a1mu1Moments}

We first consider curves of the form $C_p:\; y^2=x^p-1$. Note that $M[{}^k\mu_1]=0$ for all $0<k<p-1$. 
One can easily determine from Proposition \ref{prop:pcharpolygamma0} that the coefficient of $T$ in $P_{\gamma^0}(T)$ is $\sum_{i=1}^g s_i$, where $s_i=-(u_i+\overline{u_i})$. Hence, 
\begin{align}\label{eqn:mu1momentsp}
    M_n[^0\mu_1]&=\sum_{\alpha_1, \ldots, \alpha_g=0}^n\binom{n}{\alpha_1,\alpha_2,\ldots,\alpha_g} M_{\alpha_1}[s_1]M_{\alpha_2}[s_2]\cdots M_{\alpha_g}[s_g].
\end{align}

Similarly, for curves of the form $C_{2p}:\; y^2=x^{2p}-1$,
\begin{align}\label{eqn:mu1moments2p}
    M_n[^{k,j}\mu_1]&=2^n\sum_{\alpha_1, \ldots, \alpha_{g/2}=0}^n\binom{n}{\alpha_1,\alpha_2,\ldots,\alpha_{g/2}} M_{\alpha_1}[s_1]M_{\alpha_2}[s_2]\cdots M_{\alpha_{g/2}}[s_{g/2}]
\end{align}
whenever $k=j=0$ and $ M_n[^{k,j}\mu_1]=0$ otherwise. 

We used Sage \cite{Sage} to evaluate Equations \eqref{eqn:mu1momentsp} and \eqref{eqn:mu1moments2p}, and then average over the components, to get the  $\mu_1$-moments shown in Table \ref{table:momemntsp}. Note that $M_n[\mu_1]=0$ for all odd $n$, so we omit those values from the table. For comparison, we computed the numerical $a_1$-moments for primes up to $2^{23}$.

\begin{table}[h]
\begin{tabular}{|c|l|l|l|l|l|}
\hline
$m$ & & $M_2$ & $M_4$ & $M_6$ & $M_8$ \\ 
\hline
10& $\mu_1$ & 2&72&3200&156800\\
& $a_1$ &  $1.989 $ &  $   71.484 $ &  $  3172.685 $ &  $   155240.208 $\\
\hline
11& $\mu_1$ &  1 &  27 & 1090 &  55195 \\
& $a_1$ &  $0.991 $ &  $  26.425$ &  $ 1049.681  $ &  $  52204.146$\\
\hline
13& $\mu_1$ &  1 & 33 &  1660 &  106785 
 \\
 & $a_1$ &  $ 0.999$ &  $ 33.108 $ &  $  1677.458 $ &  $  108839.689$\\
\hline
14& $\mu_1$ &2&120&9920&954240 \\
& $a_1$ &  $1.982 $ &  $  118.214  $ &  $ 9694.808 $ &  $ 923186.514$\\
\hline
17& $\mu_1$ &  1  &  45 & 3160 & 290605 
 \\
 & $a_1$ & $ 0.991$ &  $ 44.178 $ &  $ 3068.003 $ &  $ 279757.762$\\
\hline
19&  $\mu_1$ &  1 & 51 & 4090 &  432915
\\
& $a_1$ & $ 0.995 $ &  $ 50.601  $ &  $  4040.554 $ &  $ 425599.259$\\
\hline22& $\mu_1$ &2&216&34880&7064960\\
& $a_1$ & $ 1.996 $ &  $  213.572 $ &  $ 34047.140  $ &  $ 6805376.261$\\
\hline
\end{tabular}
\caption{Table of $\mu_1$- and $a_1$-moments for $y^2=x^m-1$ over $\mathbb Q$.}\label{table:momemntsp}
\end{table}


\begin{appendices}
\section{Examples of the $\gamma$ Matrix}\label{app:component}

In Table \ref{table:gammas} we give examples of the matrix $\gamma$ from Theorems \ref{theorem:STp} and \ref{thm:ST2p}. These were computed in Sage \cite{Sage} using Sage's chosen generators for   $(\mathbb Z/p\mathbb Z)^{\times}$ and  $(\mathbb Z/2p\mathbb Z)^{\times}$.
{
\begin{table}[h]
\begin{tabular}{|c|c||c|c|}
\hline
$m$ & $\gamma$&$m$ & $\gamma$ \\ 
\hline
10& $\begin{pmatrix}0&0&J&0\\0&0&0&I\\I&0&0&0\\0&-J&0&0 \end{pmatrix}$&13& $\begin{pmatrix}0&I&0&0&0&0\\0&0&0&I&0&0\\0&0&0&0&0&I\\0&0&0&0&J&0\\0&0&J&0&0&0\\J&0&0&0&0&0\end{pmatrix}$\\
\hline
11& $\begin{pmatrix}0&I&0&0&0\\0&0&0&I&0\\0&0&0&0&J\\0&0&J&0&0\\J&0&0&0&0\end{pmatrix}$ &14& $\begin{pmatrix}0&0&I&0&0&0\\0&0&0&0&0&I\\0&0&0&0&J&0\\0&-J&0&0&0&0\\I&0&0&0&0&0\\0&0&0&I&0&0 \end{pmatrix}$\\
\hline
\end{tabular}
\caption{Examples of $\gamma$ matrices for $y^2=x^m-1$.}\label{table:gammas}
\end{table}

}

\end{appendices}

\bibliographystyle{abbrv}
\bibliography{SatoTatebib2019.bib}

\end{document}